\documentclass{article}
\usepackage{amssymb}
\usepackage{amsthm}
\usepackage{graphicx} 
\usepackage{hyperref}
\usepackage{mathtools}
\usepackage{fullpage}

\usepackage{minted} \BeforeBeginEnvironment{minted}{\begingroup\color{black}} \AfterEndEnvironment{minted}{\endgroup} \setminted{autogobble,breaklines,breakanywhere,linenos}
\usepackage{amsmath}
\usepackage{listings}

\DeclarePairedDelimiter{\norm}{\lVert}{\rVert}

\DeclarePairedDelimiter{\floor}{\lfloor}{\rfloor}

\title{Improved bounds for lines and $1$-separated sets in Euclidean Ramsey theory}

\author{Gabriel Currier\footnote{Department of Mathematics, University of British Columbia, 1984 Mathematics Road, Vancouver, BC, Canada V6T 1Z2. currierg@math.ubc.ca}, Param Mody\footnote{University of British Columbia, Vancouver, BC, Canada. pm13@student.ubc.ca}, Zehan Xie\footnote{University of British Columbia, Vancouver, BC, Canada. zehanxie@student.ubc.ca}, Jiaming Zhang\footnote{University of British Columbia, Vancouver, BC, Canada. jiaming.ubc@gmail.com}}
\date{September 2026}

\newtheorem{theorem}{Theorem}[section]
\newtheorem{lemma}[theorem]{Lemma}

\newcommand{\E}{\mathbb{E}}

\begin{document}

\maketitle

\begin{abstract}
    Let $K$ be a $1$-separated set of diameter at most $R-1$, and let $\ell_m$ denote a collection of $m$ points on a line, with consecutive points of distance $1$ apart. Conlon and Fox (2019) demonstrated a coloring of $n$-dimensional Euclidean space avoiding red congruent copies of $\ell_2$ and blue congruent copies of $K$ for $|K| > 10000^n\log_2 R$. We show here a stronger bound, that in fact $|K| > (6.79 + o(1))^n\log R$ suffices for arbitrary $1$-separated $K$, while the improvement $|K| > (5 + o(1))^n\log R$ holds in many cases, including when $K = \ell_m$, or more generally when $K$ is contained in a low-dimensional affine subspace. We also make a special study of the case when $n=2$, demonstrating a two-coloring of two-dimensional Euclidean space avoiding red copies of $\ell_2$ and blue copies of $\ell_{6330}$. This latter result addresses a question of Erd\H{o}s and Graham.
\end{abstract}
\section{Introduction}
We let $\mathbb{E}^n$ denote $n$-dimensional Euclidean space; that is, $\mathbb{R}^n$ equipped with the Euclidean norm. We say that a set is $t$-separated if all pairs of points are of distance at least $t$ apart, and let $\ell_m$ denote a collection of $m$ points on a line, with consecutive points of distance $1$ apart. We write that $\mathbb{E}^n \rightarrow (S_1,\dots,S_k)$ if any $k$-coloring of $\mathbb{E}^n$ contains a congruent copy of $S_i$ in color $i$, for some $1 \le i \le k$. If there is a coloring that avoids such configurations, we write that $\mathbb{E}^n \not \rightarrow (S_1,\dots,S_k)$. Finally, if $S = S_1 = \dots,S_k$, we write simply $\mathbb{E}^n \xrightarrow[]{k} S$ or $\mathbb{E}^n \not \xrightarrow[]{k} S$.

The question of for which $n,S_1,\dots,S_k$ we have $\mathbb{E}^n \to (S_1,\dots,S_k)$ has a long history. The most famous question in the area, known as the Hadwiger-Nelson problem (or sometimes the \emph{chromatic number of the plane}), asks for the largest integer $k$ such that $\mathbb{E}^2 \xrightarrow[]{k} \ell_2$. The best known bounds on this problem for many years were $3 \le k \le 6$, but a recent computational breakthrough has shown $k \ge 4$ \cite{dG18}.

After the introduction of the Hadwiger-Nelson problem by Nelson in 1950 (not in print), the field was further developed in a pioneering series of papers by Erd\H{o}s, Graham, Montgomery, Rothschild, Spencer and Straus \cite{E73,E75a,E75b}. Many problems were introduced in these papers, including a number about two-colorings of $\mathbb{E}^n$. This will be the focus of the present manuscript.

In particular, we will be interested in the following question: for which $1$-separated sets $K$ and for which positive integers $m$ do we have $\mathbb{E}^n \to (\ell_2,\ell_m)$ or $\mathbb{E}^n \to (\ell_2,K)$? Questions of this type were first considered in \cite{E73,E75a,E75b}, where they showed for example that $\mathbb{E}^2 \rightarrow (\ell_2,\ell_4)$ \cite{E75a} and that $\mathbb{E}^n \not \to (\ell_6,\ell_6)$ \cite{E73}. Since then, there has been a lot of work on these and related problems. Some notable advances are the following.

\begin{itemize}
    \item $\E^2 \to (\ell_2, K)$ for any $K$ with at most $4$ points (Juh\'{a}sz \cite{J79})
    \item There is a set $K$ with $8$ points such that $\E^2 \not \to (\ell_2, K)$ (Csizmadia and T\'{o}th \cite{CT94})
    \item $\E^2 \to (\ell_2, \ell_5)$ (Tsaturian \cite{T17})
    \item $\E^3 \to (\ell_2, \ell_6)$ (Arman and Tsaturian \cite{AT18})
    \item $\E^n \not \to (\ell_2, \ell_{c_1^n})$ (Conlon and Fox \cite{CF19}) and $\E^n \to (\ell_2, \ell_{c_2^n})$ (Szlam \cite{S01}) for constants $c_1 > c_2 > 1$.
    \item $\E^n \to (\ell_3,\ell_3)$ for $n \ge 2$ (Currier, Moore and Yip \cite{CMY}).
    \item $\E^n \not \to (\ell_3, \ell_{10^{50}})$ (Conlon and Wu \cite{CW23}), which was subsequently improved to $\E^n \not \to (\ell_3, \ell_{1177})$ (F\"uhrer and T\'oth \cite{FT24}) and $\E^n \not \to (\ell_3, \ell_{20})$ (Currier, Moore and Yip \cite{CurrierMooreYip2026Avoiding})
    \item $\E^n \not \to (\ell_4,\ell_{14})$ and $\E^n \not \to (\ell_5,\ell_{8})$ (Currier, Moore and Yip \cite{CurrierMooreYip2026Avoiding}) and $\E^n \not \to (\ell_6,\ell_6)$ (Erd{\H{o}}s et al. \cite[Theorem 12]{E73})
\end{itemize}

Conlon and Fox in \cite{CF19} actually proved something more general; they showed that any $1$-separated $K$ of diameter at most $R-1$ and size at least $10^{4n}\log_2 R$\footnote{Logarithms are assumed to be natural unless otherwise specified.} satisfies $\E^n \not \to (\ell_2,K)$. Taking $K = \ell_m$ with $m \ge 10^{5n}$ gives in particular that $\E^n \not \to (\ell_2,\ell_m)$. The goal of the present manuscript is to improve these bounds. Our first result is the following:

\begin{theorem}\label{thm:main_bign}
 There is an absolute constant $C >0$ such that the following holds. Suppose $R_K > 2$, and that $K$ is a $1$-separated set of diameter at most $R_K-1$. Suppose furthermore that any point in $K$ has at most $C_K$ other points of $K$ at distance $<5$, and that $|K| \ge C n^6\log R_K \max\{5^n,C_K\}$. Then $\E^n \not \to (\ell_2,K)$.
 \end{theorem}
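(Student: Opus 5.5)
We follow the Conlon--Fox random construction and tune the parameters more carefully. Colour $\E^n$ by randomly placing red ``blobs'' that contain no two points at distance exactly $1$, and colour everything else blue. Concretely, take a lattice or periodic packing in $\E^n$ (a torus $\mathbb{R}^n/L\mathbb{Z}^n$ with $L$ a large multiple of the diameter scale $R_K$). Around points of a random subset of a $1$-separated family of centres we place open balls of radius just under $1/2$. Any two points inside one such ball are at distance $<1$. Centres at mutual distance $>2$ guarantee that points in different balls are at distance $>1$. So the red set contains no copy of $\ell_2$ deterministically. The task is to choose the random model so that every congruent copy of $K$ meets red with positive probability, and then to union bound over a discretized family of copies of $K$.

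Key steps, in order. First, fix a maximal $2$-separated-ish set of centres and a density. Around each centre we independently choose a random translate, i.e.\ a uniformly random point in a cell, and make a ball of radius $r<1/2$ red. We choose things so that a fixed point $x$ is red with probability $p$ of order $c^{-n}$, where $c$ is the ratio of cell volume to ball volume. The packing-versus-ball volume calculation is what produces the base $5$ (balls of radius $1/2$ placed in cells of radius $\approx 5/2$).

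Second, for a fixed copy of $K$, bound the probability that it is entirely blue. Points of $K$ at distance $\ge 5$ fall into essentially independent cells, since each cell's randomness affects only points within distance about $5/2$ of its centre. Points within distance $<5$ are dependent, but by hypothesis each point has at most $C_K$ such neighbours. We therefore greedily extract an independent subset of size $\gtrsim |K|/(C_K+1)$, or, for the $5^n$ term, use the cell structure to find $\gtrsim |K|/\mathrm{poly}(n)$ points lying in distinct cells. We then get $\Pr[\text{copy blue}]\le (1-p)^{|K'|}\le \exp(-p|K|/\max\{C_K, n^{O(1)}\})$. Alternatively, Janson's inequality gives the bound with $\max\{5^n,C_K\}$ directly: $\mu=\sum_x p\approx p|K|$ and $\Delta\lesssim C_K p^2|K|$.

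Third, discretize. Copies of $K$ in the torus are parametrized by a translation (volume $(LR_K)^n$) and a rotation in $O(n)$. A $\delta$-net with $\delta\approx 1/\mathrm{poly}(n R_K)$ has size $\exp(O(n^2\log(nR_K)))$. We shrink the radius slightly so that a copy near a net element which is all blue in the shrunken colouring forces the net element to be blue in a mildly enlarged version; this is standard. The union bound then succeeds once $p|K|/\max\{\cdot\}\gg n^2\log(nR_K)$, and the extra $n^4$ slack absorbs the polynomial losses. The torus colouring is periodic, so it lifts to all of $\E^n$, and every copy of $K$, having diameter below the period scale, projects faithfully.

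The main obstacle is the second step: making the dependence genuinely local with range $5$ while keeping the red probability per point at $5^{-n}$ up to polynomial factors. I would first try a lattice with Voronoi cells of inradius $\ge 1$ and circumradius $\le 5/2$, for example a scaled $\mathbb{Z}^n$ with a random shift and random ball positions inside each cell. If the volume ratio is too lossy I would switch to a Rogers-type covering. The $n^6$ factor leaves room for such losses.
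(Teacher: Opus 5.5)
Your proposal has a genuine gap in the construction itself, before any probability is estimated. You want the red blobs to be separated \emph{deterministically}, so that there is no red $\ell_2$, while each cell chooses its blob independently. These two requirements together force a region that is never red.

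In your model, each cell independently places (with some probability) a ball of radius $r<1/2$ at a random position in the cell, and all cells are treated alike. Two balls in adjacent cells whose centres are at distance in $(1-2r,1+2r)$ contain two points at distance exactly $1$. This is an open condition and the cells choose independently, so it happens with positive probability unless the allowed positions exclude it. Excluding it forces every ball to stay at distance $\ge 1/2$ from the walls of its cell. Hence every point within $1/2$ of a cell wall is blue in every outcome. (More generally, independent choices force the possibly-red sets of different cells to be about $1$ apart, so they cannot cover the connected space.)

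Your claim that a fixed point $x$ is red with probability $p\approx 5^{-n}$ therefore holds only on average; near the walls that probability is $0$. For the scaled $\mathbb{Z}^n$ you suggest, the walls are entire hyperplanes. So a copy of $\ell_m$, or of any $K$ lying in a hyperplane, placed in one of them is blue with probability $1$. A random global shift or rotation cannot help, since having a blue copy of $K$ is invariant under rigid motions of the colouring. Thus the bound $\Pr[\text{copy blue}]\le \exp(-cp|K|)$ in your second step is false for such copies, and the union bound collapses. In fact that colouring always contains a blue $\ell_m$, even though $\ell_m$ (with $C_K=8$, $R_K=m$) is covered by the theorem.

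Two smaller points. A scaled $\mathbb{Z}^n$ cannot have inradius $\ge 1$ and circumradius $\le 5/2$ once $n\ge 7$, since that ratio is $\sqrt n$. And your greedy independent-subset bound gives exponent about $5^{-n}|K|/C_K$, i.e.\ the product $5^nC_K$ rather than $\max\{5^n,C_K\}$.

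The missing idea is to give up independence between cells and enforce separation by a local thinning rule, which is what the paper does. Scale a Rogers-type covering lattice so its covering radius is just under $1/2$; the cells then have diameter $<1$ and volume $\gtrsim \mathrm{Vol}(B^n_{1/2})/n^2$. Colour whole cells: select each cell independently with probability $p$, and make a selected cell $D$ red only if no cell of $z(D)$ (the cells within distance $1$ of $D$) is selected.

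This rules out red $\ell_2$ deterministically, yet every cell is red with the same probability $p(1-p)^{Z}$. Here $Z=|z(D)|\lesssim 5^n n^2$, because all cells of $z(D)$ lie in the ball of radius $5/2$ about the centre of $D$; this, not a ball-in-cell volume ratio, is the real source of the base $5$. The colour of $D$ depends only on $\{D\}\cup z(D)$, so points at distance $\ge 5$ get independent colours.

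Suen's inequality with dependency graph $\norm{q_i-q_j}<5$ and $p=\min\{1/(2Z),1/(4C_K)\}$ then gives $\Delta e^{2\delta}\le \mu/2$, hence $\Pr[\text{all blue}]\le e^{-|K|p/4}$. Taking $p\lesssim 1/C_K$ is exactly what yields $\max\{5^n,C_K\}$.

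With the colouring fixed this way, your net-over-rigid-motions discretization could plausibly replace the paper's Milnor--Thom count of admissible cell collections. With whole-cell colourings, however, you would have to handle net copies whose points fall in different cells from the true copy; the sign-pattern count handles this automatically.
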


The upcoming lemma \ref{lem:better_sep} shows that for any $1$-separated $K$ we have $C_K \lesssim 6.79^n,$\footnote{Generally, we say that $A \lesssim B $ if there exists an absolute constant $C$ such that $A \le CB$.} so as an immediate corollary we have $\E^n \not \to (\ell_2,K)$ as long as $|K| > (6.79+o(1))^n\log R_K$. Moreover, if $K$ is contained in a $d$-dimensional affine subspace, then lemma \ref{lem:better_sep} gives $C_K \lesssim 6.79^d$, and so if $d \le \frac{\log(5)}{\log(6.79)}n$ then the weaker condition $|K| > (5+o(1))^n\log R_K$ is sufficient. This in particular shows that $\mathbb{E}^n \not \to (\ell_2,\ell_m)$ for $m > (5+o(1))^n$. As noted in \cite{CF19}, an argument of Szlam from \cite{S01} combined with a result of Raigorodskii \cite{Raigorodskii2000} shows that $\E^n \to (\ell_2,K)$ for any $K$ of size at most $(1.239... + o(1))^n$. Thus, theorem \ref{thm:main_bign} substantially closes the gap between the upper and lower bounds, both for $\ell_m$ and for general $1$-separated $K$ of bounded diameter.

The case of $n=2$ and $K=\ell_m$ has received further attention in the literature; see for example the above results of Erd\H{o}s et al. \cite{E75a} and of Tsaturian \cite{T17}. Furthermore, Erd\H{o}s and Graham asked this question specifically in \cite{ErdosGraham1980OldNew}, where they note (without proof) that $\E^2 \not \to (\ell_2,\ell_m)$ for some $m$ ``at most 10000000.'' The result of Conlon and Fox in \cite{CF19} implies that $\E^2 \not \to (\ell_2,\ell_{10^{10}})$, although a better bound can be read out of their proof without significantly changing the argument. We show the following further improved bound.

\begin{theorem}\label{thm:main_smalln}
    $\E^2 \not \to (\ell_2,\ell_{6330})$.
\end{theorem}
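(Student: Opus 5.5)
The plan is to exhibit an explicit two-colouring of $\E^2$ of Conlon--Fox type, now with its parameters tuned for $n=2$. The underlying idea is to colour a sparse, randomly placed family of small disks red and everything else blue.

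\textbf{Construction.} Fix a lattice $\Lambda\subset\E^2$, taking the triangular lattice since its covering properties are best in the plane. Let $r<1/2$ be a radius to be chosen; any disk of radius $r$ has diameter $<1$. Each lattice point $x$ gets an independent uniform random offset $v_x$ from a small set or region $V$. We colour red the points of $\bigcup_{x\in\Lambda} B(x+v_x,r)$ and colour everything else blue.

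\textbf{Red side.} We ensure that no red pair is at distance exactly $1$. A pair inside a single disk is automatically fine, since the disk has diameter $<1$. For pairs in different disks, we choose the lattice spacing $s$ and the offset set $V$ so that any two distinct disks are at distance $>1$. This requires $s-2r-\operatorname{diam}(V)>1$. With this choice the red side holds deterministically.

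\textbf{Blue side.} This is a probabilistic, or preferably deterministic, covering argument. Take any unit-spaced copy of $\ell_m$. Consider the lattice cells it meets: there are about $2m/s$ of them, one per length of order $s$. In each cell, the probability over $v_x$ that the disk $B(x+v_x,r)$ contains some point of the segment's $\ell_m$ is at least some $p>0$, uniformly in the position of the line. Crucially, the points are spaced $1$ apart while the disk diameter $2r$ is close to $1$, so a chord through the disk of length $\ge 1$ catches a point whenever the line passes near enough to the centre. Hence
\[
\Pr[\text{the copy is all blue}]\le (1-p)^{cm}.
\]
For $n=2$ it is cleaner to make everything deterministic. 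Use a periodic colouring with period lattice $N\Lambda$, choose the offsets $v_x$ by computer search over a finite pattern, and then verify that every line meets a red disk within every window of length $6329$. A line can be parametrised by its direction and offset modulo the period, and the parameter space is compact. One discretises it and bounds the error using Lipschitz dependence on the parameters, with a safety margin obtained by shrinking the radius slightly.

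\textbf{Main obstacle.} The main difficulty is the blue side: showing that every unit-spaced arithmetic progression of length $6330$ in \emph{every} direction and position hits red. Directions close to lattice directions are the hardest. Along them a line can slip through the gaps between disk rows for a long stretch, and the random offsets, or the searched pattern, must break this up. My first attempt would be a union bound over a net of lines combined with the probabilistic estimate, giving a bound of the form $m\gtrsim \log(\#\text{net})/p$. I would then optimise $r$, $s$ and $V$ numerically, and if needed switch to a computer-verified periodic pattern to bring the constant down to $6330$.
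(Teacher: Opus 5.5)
\textbf{There is a fatal gap on the blue side: your construction always leaves an entire blue line, and no choice of offsets can remove it.} Take a row of $\Lambda$ (the lattice points on a line through a shortest vector) and let $u$ be its unit normal. Consecutive rows are $h=s\sqrt{3}/2$ apart for the triangular lattice, and $\det\Lambda/s\ge s\sqrt{3}/2$ apart for any other lattice of minimum distance $s$. Whatever the offsets are, every disk $B(x+v_x,r)$ attached to a given row projects onto $u$ inside one fixed interval of length at most $2r+\operatorname{diam}(V)$. Your separation condition $s>1+2r+\operatorname{diam}(V)$ then gives
\[
h-2r-\operatorname{diam}(V)>\tfrac{\sqrt{3}}{2}-\bigl(1-\tfrac{\sqrt{3}}{2}\bigr)\bigl(2r+\operatorname{diam}(V)\bigr),
\]
which is positive whenever $2r+\operatorname{diam}(V)<3+2\sqrt{3}$, in particular for every small $V$. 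So between consecutive rows there is an open strip containing no red point, and a whole line inside it is blue and contains $\ell_m$ for every $m$. You anticipated that near-lattice directions are hard, but in the exact row direction the obstruction is absolute. Random offsets and computer-searched periodic patterns are equally helpless.

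Your hitting heuristic is also wrong. A chord of a disk of radius $r<1/2$ has length $<1$, so a unit-spaced progression can step over any such disk. Cells that the line meets far from $x+V$ have hitting probability $0$, so there is no uniform $p$. The root problem is that you get deterministic red separation by pinning the red set to a rigid lattice, and that rigidity is exactly what creates the corridors.

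The missing idea is to obtain separation by \emph{deletion} rather than by rigidity. The paper tiles the plane by regular hexagons of diameter $1-\epsilon$. It puts each hexagon into a set $Q$ independently with probability $p=1/19$, and colours $D$ red only if none of the $18$ hexagons of $z(D)$ lies in $Q$. Red points are then never at distance $1$. Yet every point of every copy of $\ell_m$, in every direction and position, is red with the same probability $p(1-p)^{18}$. The indicators are dependent only for $|i-j|<5$ (Lemma~\ref{lem:5sep}), and $\E[X_iX_{i+1}]=0$, so Janson's form of Suen's inequality bounds the all-blue probability of a fixed copy by $e^{-0.01557m}$.

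The second missing ingredient is periodicity. On the whole plane, infinitely many far-apart copies are independent, so some copy is almost surely all blue and no union bound can work. The paper therefore colours the torus $\E^2/L\Lambda$ with $L=3m$ and counts admissible collections of cells rather than lines. By Milnor--Thom, the cells met by a copy are determined by the signs of at most $100m^2$ linear forms in the four coordinates of $q_1,q_2$. This gives at most $2^45^{16}m^8$ collections, and $2^45^{16}m^8e^{-0.01557m}<1$ once $m\ge 6330$. Your net-of-lines plan with a Lipschitz margin is never quantified, and nothing in it produces the constant $6330$.
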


Our colorings follow the same general outline as those from \cite{CF19}. However, we are able to base our colorings on lattices instead of the greedy point sets used in \cite{CF19}, as well as use more sophisticated probabilistic and geometric arguments to substantially improve the bounds. This is especially true in the case of theorem \ref{thm:main_smalln} where we are also able to exploit the simpler structure of $\mathbb{E}^2$ and $\ell_m.$

In Section \ref{sec:prelim}, we will introduce various preliminaries from probability, geometry and algebra that we will need for our arguments. In section \ref{sec:mains}, we'll describe the common framework used for proving theorems \ref{thm:main_bign} and \ref{thm:main_smalln}, and then in sections \ref{subsec:low_d} and \ref{subsec:bigd} we'll execute these two proofs. For the remaining sections, if we refer to a \emph{copy} of a given configuration, we are referring to a congruent copy.

\section{Preliminaries and Notation}\label{sec:prelim}

In the upcoming sections, we will need a few tools from geometry and probability to complete the proof. For the remainder of the paper, $Vol_i(V)$ refers to the $i$-dimensional volume of the set $V$. If $i = n$ we will write just $Vol(V)$. Furthermore, we will use $B_r^n(q)$ to denote the (closed) $n$-dimensional ball of radius $r$ centered at a point $q \in \mathbb{E}^n$. If we write simply $B_r^n$, this refers to the ball centered at $0$. We will need the following lemmas from geometry. To start, we will need to prove a packing-type bound for $1$-separated sets.

\subsection{A packing lemma for $1$-separated sets}\label{sec:packing}

The main result of this section tells us that subsets of $t$-separated sets cannot be packed too closely. A crude but useful version of this principle is the following.

\begin{lemma}\label{lem:s_t_sep}
Let $K \subset \mathbb{E}^n$ be $t$-separated. Then, for any point $q \in \mathbb{E}^n$ and $r \ge 0$, there are at most $(2r/t+1)^n$ points of $K$ within distance $r$ of $q$.
    
\end{lemma}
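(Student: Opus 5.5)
This is the standard volume-packing argument. Let $S = K \cap B_r^n(q)$ be the set of points of $K$ within distance $r$ of $q$. Around each point $p \in S$ we place the open ball of radius $t/2$ centered at $p$. We will show that these balls are pairwise disjoint and all lie inside one larger ball, and then compare volumes.

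First, disjointness. Since $K$ is $t$-separated, any two distinct $p, p' \in S$ satisfy $\norm{p-p'} \ge t$. If some point $x$ lay in both open balls, the triangle inequality would give $\norm{p-p'} \le \norm{p-x}+\norm{x-p'} < t/2 + t/2 = t$, a contradiction.

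Second, containment. For any $x$ with $\norm{x-p} < t/2$, we have $\norm{x-q} \le \norm{x-p} + \norm{p-q} < t/2 + r$. So every small ball sits inside $B^n_{r+t/2}(q)$.

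Comparing volumes, and using that the volume of an $n$-ball scales as the $n$-th power of its radius, we get
\[
|S|\cdot Vol\bigl(B^n_{t/2}\bigr) \le Vol\bigl(B^n_{r+t/2}\bigr) = \left(\frac{r+t/2}{t/2}\right)^n Vol\bigl(B^n_{t/2}\bigr).
\]
Dividing through gives $|S| \le (2r/t+1)^n$, as claimed. In particular $S$ is finite, so the count is well defined.

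None of these steps is a genuine obstacle. The only points needing care are to use open balls, or to note that boundaries have measure zero, so that disjointness holds exactly, and to handle the degenerate case $r=0$. In that case the bound reads $1$, and it holds because at most one point of $K$ can equal $q$.
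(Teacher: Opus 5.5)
Your proposal is correct and follows the same volume-packing argument as the paper: disjoint open balls of radius $t/2$ around the points, all contained in $B^n_{r+t/2}(q)$, followed by a volume comparison. Your explicit triangle-inequality checks for disjointness and containment fill in details that the paper's proof only states.
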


\begin{proof}
    Since $K$ is $t$-separated, a collection of (open) balls of radius $t/2$ placed around points of $K$ will all be disjoint. Furthermore, for any point within distance $r$ of $q$, these balls will be contained in $B_{r+t/2}^n(q)$. A volume argument shows that the number of such points must be at most $$\frac{Vol(B_{r+t/2}^n(q))}{Vol(B_{t/2}^n)} = \left(\frac{r+t/2}{t/2}\right)^n = \left(\frac{2r}{t}+1\right)^n$$
\end{proof}

We will wish to give a stronger bound than lemma \ref{lem:s_t_sep}. To do so, we will need a couple of definitions. For any $0 < \theta< \pi/2$ we let
\[
F(\theta)
:=
\frac{1+\sin\theta}{2\sin\theta}
\log\left(\frac{1+\sin\theta}{2\sin\theta}\right)
-
\frac{1-\sin\theta}{2\sin\theta}
\log\left(\frac{1-\sin\theta}{2\sin\theta}\right)
\]
and for each $0 \le \eta < 1/2$ and $r > 1$, we define
\[
\theta_{\eta,r}
:=
2\arcsin\left(
    \frac{\sqrt{1-\eta^2}}{2r}
\right).
\]
Our goal is the following. For convenience, we state this only for $1$-separated sets, but a corresponding result can be proven for $t$-separated sets.

\begin{lemma}\label{lem:better_sep}

    Let $K \subset \mathbb{E}^n$ be $1$-separated and fix $r > 1$. Then, for any point $q \in \mathbb{E}^n$, the number of points of $K$ within distance $r$ of $q$ is at most $(\exp F(\theta_{0,r}) + o(1))^n$.
\end{lemma}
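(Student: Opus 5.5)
The plan is to reduce the count to a bound on spherical codes and then apply the Kabatiansky--Levenshtein bound. The function $F(\theta)$ defined above is exactly the Kabatiansky--Levenshtein exponent. That bound states that a set of unit vectors in $\mathbb{R}^n$ with pairwise angles at least $\theta$ (for fixed $0<\theta<\pi/2$) has size at most $\exp\big(n(F(\theta)+o(1))\big)$. I would cite it as a black box. The role of $\eta$ in $\theta_{\eta,r}$ is to absorb the error from the fact that the points of $K$ near $q$ do not all lie at the same distance from $q$.

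First, translate so that $q=0$, and fix a small $\varepsilon>0$ (with $\varepsilon<1/2$). At most one point of $K$ lies in $B^n_{1/2}$, since $K$ is $1$-separated. Cover the remaining annulus $\{1/2\le |x|\le r\}$ by $N=\lceil 2r/\varepsilon\rceil$ shells $S_j=\{s_j\le |x|\le s_j+\varepsilon\}$ with $s_j+\varepsilon\le r$. Since $N$ depends only on $r$ and $\varepsilon$, not on $n$, it suffices to bound each shell by $\exp\big(n(F(\theta_{\varepsilon,r})+o(1))\big)$. Then let $\varepsilon\to 0$ slowly, using continuity of $\theta\mapsto F(\theta)$ and of $\eta\mapsto\theta_{\eta,r}$, so that $\exp F(\theta_{\varepsilon,r})\to \exp F(\theta_{0,r})$.

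Within a shell, take distinct $x,y\in K\cap S_j$ with $|x|=a$, $|y|=b$ and angle $\varphi$ between them. By the law of cosines,
\[
1\le |x-y|^2=(a-b)^2+4ab\sin^2(\varphi/2)\le \varepsilon^2+4(s_j+\varepsilon)^2\sin^2(\varphi/2),
\]
so
\[
\sin(\varphi/2)\ge \frac{\sqrt{1-\varepsilon^2}}{2(s_j+\varepsilon)}\ge \frac{\sqrt{1-\varepsilon^2}}{2r}.
\]
Hence $\varphi\ge\theta_{\varepsilon,r}$. In particular distinct points of $K\cap S_j$ have distinct directions, so the radial projections $x/|x|$ form a spherical code with minimum angle $\theta_{\varepsilon,r}$ and the same cardinality. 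Since $r>1$ we have $\theta_{\varepsilon,r}<\pi/2$, so $F$ is defined there, and the Kabatiansky--Levenshtein bound gives the per-shell estimate. (If a shell's angular separation exceeded $\pi/2$ the count would be at most $2n$, which is subexponential anyway.) I also use that the Kabatiansky--Levenshtein bound is monotone in the angle: a code with minimum angle at least $\theta_{\varepsilon,r}$ is in particular a code with that minimum angle, so using the worst shell, with outer radius $r$, is legitimate.

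Summing over shells gives a total of at most $1+N\exp\big(n(F(\theta_{\varepsilon,r})+o(1))\big)$. Letting $\varepsilon=\varepsilon(n)\to0$ slowly enough that $N=O(r/\varepsilon)$ is still $e^{o(n)}$, and also slowly enough that the $o(1)$ in the Kabatiansky--Levenshtein bound stays uniform over the angles $\theta_{\varepsilon,r}$ (which lie in a compact subinterval of $(0,\pi/2)$), yields $(\exp F(\theta_{0,r})+o(1))^n$. I expect the main obstacle to be this step of invoking the Kabatiansky--Levenshtein bound with the needed uniformity in $\theta$. The cleanest route is probably to fix $\varepsilon$, obtain $\limsup_n \frac1n\log|K\cap B_r^n|\le F(\theta_{\varepsilon,r})$, and only then let $\varepsilon\to0$. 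The shell decomposition itself is routine.
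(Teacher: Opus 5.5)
Your proposal is correct and is essentially the paper's proof. The paper also slices $B_r^n$ into $\lfloor r/\eta\rfloor+1$ radial shells of width $\eta$, and uses the same law-of-cosines identity to show that each shell's points have pairwise angles at least $\theta_{\eta,r}$. It then applies the Kabatiansky--Levenshtein bound for fixed $\eta$, and only afterwards lets $\eta\to 0$ via continuity of $F$. This is exactly your ``cleanest route,'' which avoids the uniformity-in-$\theta$ issue you flag.

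One trivial slip: the closed ball $B^n_{1/2}$ can contain two antipodal points at distance exactly $1$. Use the open ball, or the bound $2$ there; this changes nothing.
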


For our primary application we will apply lemma \ref{lem:better_sep} with $r=5$, so we note that $\exp F(\theta_{0,5}) = 6.7844...$ for later use. To begin with the proof, we will define $A(n,\theta)$ to be the maximum size of a set in $\mathbb{R}^n\setminus \{0\}$ where every pair of points makes an angle of at least $\theta$ with the origin. Equivalently, we can write $$A(n,\theta) = \max\{|U| : U \subset S^{n-1} \text{ and } \langle u,v\rangle \le \cos \theta  \text{ for all distinct } u,v \in U\}.$$ For an upper bound on $A(n,\theta)$, we will use theorem $4$ from \cite{KabatyanskiiLevenshtein1978}; for an exposition of this result, see equation $2.6$ of \cite{CohnZhao2014}. This says that for any fixed  \(0<\theta<\pi/2\) we have
\begin{align}
\frac{1}{n}\log A(n,\theta) \label{line:Cohn_Zhao}
\leq
F(\theta) + o(1).
\end{align}
We will use this to prove lemma \ref{lem:better_sep}. To do so, we define $M_r^n$ to be the maximum size of a $1$-separated set contained in $B_r^n$. We will prove the following bound on $M_r^n$.

\begin{lemma}\label{lem:shell}

Let $0 < \eta < 1/2$ and $r > 1$. Then, 
\(
M_r^n
\leq
\left(\floor*{\frac{r}{\eta}}+1\right)
A(n,\theta_{\eta,r}).
\)
\end{lemma}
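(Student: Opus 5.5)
The plan is to cut the ball $B_r^n$ into concentric spherical shells of thickness $\eta$. Inside one shell, $1$-separation will force any two points to subtend a large angle at the origin, so each shell holds at most $A(n,\theta_{\eta,r})$ points. Since there are $\floor*{r/\eta}+1$ shells, this gives the bound.

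Let $K\subset B_r^n$ be $1$-separated. For $k=0,1,\dots,\floor*{r/\eta}$, define the shells
$$S_k=\{x\in B_r^n: k\eta\le \norm{x}<(k+1)\eta\},$$
with the last shell also including the points of norm exactly $r$. These shells cover $B_r^n$, and any two points $x,y$ in the same shell satisfy $\bigl|\norm{x}-\norm{y}\bigr|\le\eta$. It then suffices to show that $|K\cap S_k|\le A(n,\theta_{\eta,r})$ for each $k$.

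\emph{The innermost shell.} The shell $S_0$ is the ball of radius $<\eta<1/2$, so any two of its points are at distance $<1$. Hence it contains at most one point of $K$. Since $A(n,\theta)\ge 1$ trivially, this shell is fine, and the origin never needs to be handled elsewhere.

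\emph{The shells $S_k$ with $k\ge 1$.} All points here are nonzero. Take distinct $x,y\in K\cap S_k$, and write $a=\norm{x}$, $b=\norm{y}$, and $\varphi\in[0,\pi]$ for the angle between them at the origin. The law of cosines gives
$$1\le \norm{x-y}^2=(a-b)^2+2ab(1-\cos\varphi)=(a-b)^2+4ab\sin^2(\varphi/2).$$
Using $|a-b|\le\eta$ and $ab\le r^2$, this yields $4r^2\sin^2(\varphi/2)\ge 1-\eta^2$, so $\sin(\varphi/2)\ge \sqrt{1-\eta^2}/(2r)$. This right-hand side lies in $(0,1)$ because $r>1$. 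Since $\varphi/2\in[0,\pi/2]$ and $\arcsin$ is increasing there, we get $\varphi\ge\theta_{\eta,r}$.

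In particular the unit vectors $x/\norm{x}$ for $x\in K\cap S_k$ are distinct, and they pairwise have inner product $\cos\varphi\le\cos\theta_{\eta,r}$. They therefore form an admissible set in the definition of $A(n,\theta_{\eta,r})$, which bounds $|K\cap S_k|$.

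Summing over the $\floor*{r/\eta}+1$ shells gives $M_r^n\le(\floor*{r/\eta}+1)A(n,\theta_{\eta,r})$. The only genuine step is the law-of-cosines angle estimate; the rest is bookkeeping. The points needing care there are keeping $|a-b|\le\eta$ within each shell, including the boundary sphere of radius $r$, and keeping the origin out of the angular count.
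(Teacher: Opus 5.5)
Your proof is correct and follows the paper's argument essentially step for step. It uses the same half-open shells of width $\eta$, the same observation that the innermost shell holds at most one point, and the same law-of-cosines bound $\sin^2(\varphi/2)\ge (1-\eta^2)/(4r^2)$ to get angular separation $\theta_{\eta,r}$ within each remaining shell. (Adding the points of norm exactly $r$ to the last shell is harmless but unnecessary, since $r<(\floor*{r/\eta}+1)\eta$ already puts them in the last half-open shell.)
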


\begin{proof}
 Let $K$ be a $1$-separated set contained in $B_r^n$. For any $j \in \mathbb{Z}^{\ge 0}$, we denote $$K_j := \{q \in K: \norm{q} \in [j\eta,(j+1)\eta)\}$$ Our goal is to show that $|K_j| \le A(n,\theta_{\eta,r})$ for each $j$. Note that $K_0$ has diameter at most $2 \eta < 1$, and since $K$ is $1$-separated it follows that $|K_0| \le 1 \le A(n,\theta_{\eta,r}).$ Thus, we may assume $j \ge 1$ and thus all points are non-zero. 

Now, we want to show that the points of $K_j$ have angular separation at least $\theta_{\eta,r}$, and the claim will follow. Let distinct $x,y \in K_j$ have angular separation $\theta$. We can derive from the law of cosines that $$\sin^2(\theta/2) = \frac{||x-y||^2-(||x||-||y||)^2}{4||x||||y||} \ge \frac{1-\eta^2}{4r^2}$$ and thus $\sin(\theta/2) \ge \frac{\sqrt{1-\eta^2}}{2r}.$ Since $\theta/2 \in [0,\pi/2]$ and $\sin$ is increasing on this interval, we conclude that $\theta \ge 2 \arcsin\left(\frac{\sqrt{1-\eta^2}}{2r}\right)$. Thus, $|K_j| \le A(n,\theta_{\eta,r})$.

Finally, we observe that since $K \subset B_r^n$, we have that $|K_j| = 0$ for any $j > \lfloor r/\eta\rfloor$. Thus, we conclude $$|K| = \sum_{j = 0}^{\lfloor r/\eta\rfloor} |K_j| \le \left(\lfloor r/\eta\rfloor+1\right) A(n,\theta_{\eta,r})$$

\end{proof}

    Now, we prove lemma \ref{lem:better_sep}

    \begin{proof}[Proof of lemma \ref{lem:better_sep}]

        Let $K$ and $q$ be as in the statement of lemma \ref{lem:better_sep}, and let $K_{q,r} := \{q' \in K : ||q-q'|| < r\}$. Translating $q$ to the origin, we see clearly that $|K_{q,r}| \le M_r^n$. Using Lemma \ref{lem:shell} we see $$(1/n)\log M_r^n \le (1/n)\log\left(\lfloor r/\eta\rfloor+1\right) + (1/n) \log A(n,\theta_{\eta,r}).$$ We observe $(1/n)\log\left(\lfloor r/\eta\rfloor+1\right) \to 0$ as $n \to \infty$. Using (\ref{line:Cohn_Zhao}), we see that for a fixed $0 <\eta < 1/2$, we get $$(1/n)\log M_r^n \le F(\theta_{\eta,r}) + o(1).$$ Since $F$ is continuous and $\theta_{\eta,r} \to \theta_{0,r}$ as $\eta \to 0$, we see $$(1/n)\log M_r^n \le F(\theta_{0,r}) + o(1).$$ Using $|K_{q,r}| \le M_r^n$ completes the claim.
     \end{proof}

\subsection{Other auxiliary lemmas}

First, we will need an inequality due to Suen (and refined by Janson) \cite{Janson1998Suen,Suen1990Correlation} from probability. The theorem deals with the case when we have a sequence of events which are mostly (but not entirely) independent from one another, and we want to bound the probability that none of them happens. It says that, as long as the dependencies are relatively infrequent, then the probability that none of them happens is close to what it would be if they were independent, up to a small error term. 

More formally, suppose we have indicator variables $X_1,\dots,X_m,$ where $\mathbb{E}[X_i] = p_i$ and $X := \sum X_i$. We say that $i \sim j$ is a dependency graph on $[m]$ if, given families $I,J \subset [m]$ with no edges between $I,J$, the families $\{X_i\}_{i \in I}$ and $\{X_j\}_{j \in J}$ are independent. Then, we define $\mu = \mathbb{E} [X] = \sum p_i$ and $\Delta = \sum_{i \sim j} \mathbb{E}[X_iX_j]$ (where the sum is taken over unordered pairs $i,j$) and $\delta_i = \sum_{j \sim i} p_j$ and $\delta = \max_i \delta_i$. In \cite{Janson1998Suen}, Janson proved the following bound on $\mathbb{P}(X = 0)$.

\begin{theorem}\label{thm:Suen}
$\mathbb{P}
(X=0) \le e^{-\mu + \Delta e^{2\delta}}.$
\end{theorem}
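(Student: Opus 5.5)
The plan is to prove Janson's form of Suen's inequality by writing $\mathbb{P}(X=0)$ as a telescoping product of conditional probabilities. Order the indices $1,\dots,m$ and let $A_i$ be the event $\{X_1=\dots=X_i=0\}$, so that
\[
\mathbb{P}(X=0)=\prod_{i=1}^m \mathbb{P}(X_i=0\mid A_{i-1}) = \prod_{i=1}^m\bigl(1-\mathbb{P}(X_i=1\mid A_{i-1})\bigr)\le \exp\Bigl(-\sum_i \mathbb{P}(X_i=1\mid A_{i-1})\Bigr).
\]
It then suffices to show $\mathbb{P}(X_i=1\mid A_{i-1})\ge p_i - e^{2\delta}\sum_{j<i,\,j\sim i}\mathbb{E}[X_iX_j]$. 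Summing over $i$, each unordered dependent pair is counted exactly once, which gives the exponent $-\mu+\Delta e^{2\delta}$.

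For the key conditional estimate, fix $i$ and split the earlier indices into $D=\{j<i: j\sim i\}$ and $N=\{j<i: j\not\sim i\}$. Let $B$ be the event that $X_j=0$ for all $j\in N$, and $C$ the event that $X_j=0$ for all $j\in D$, so $A_{i-1}=B\cap C$. Then
\[
\mathbb{P}(X_i=1\mid A_{i-1})\ge \mathbb{P}(X_i=1, C\mid B)=\mathbb{P}(X_i=1\mid B)-\mathbb{P}(X_i=1,\overline{C}\mid B),
\]
where the inequality holds because $\mathbb{P}(C\mid B)\le 1$. By the dependency graph property, $X_i$ is independent of $\{X_j\}_{j\in N}$, so $\mathbb{P}(X_i=1\mid B)=p_i$. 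The union bound gives $\mathbb{P}(X_i=1,\overline C\mid B)\le \sum_{j\in D}\mathbb{E}[X_iX_j\mathbf{1}_B]/\mathbb{P}(B)\le \sum_{j\in D}\mathbb{E}[X_iX_j]/\mathbb{P}(B)$.

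The main obstacle is bounding $1/\mathbb{P}(B)$ by $e^{2\delta}$; this is Janson's refinement, since Suen's original argument gives a worse constant. We cannot use the independence of $B$ from $X_i$ alone, because $\mathbb{P}(B)$ could be small when many events in $N$ are likely. I would therefore not condition on all of $N$. Instead, refine the decomposition: condition only on those $j\in N$ that are \emph{not adjacent} to $i$ or to $j'$. That is, for a pair $(i,j)$ replace $B$ by $B_{ij}$, the event that all earlier indices nonadjacent to both $i$ and $j$ are zero. Then $X_iX_j$ is independent of $B_{ij}$, so $\mathbb{E}[X_iX_j\mathbf 1_{B_{ij}}]=\mathbb{E}[X_iX_j]\,\mathbb{P}(B_{ij})$.

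The remaining task is to compare $\mathbb{P}(A_{i-1})$ with $\mathbb{P}(B_{ij})$, that is, to show
\[
\mathbb{P}(A_{i-1}\mid B_{ij})\ge \prod_{k\sim i \text{ or } k\sim j}(1-p_k)\gtrsim e^{-\delta_i-\delta_j}\ge e^{-2\delta}.
\]
I would obtain this by an induction on $m$ (Janson's approach): assume the inequality $\mathbb{P}(\text{no } X_k \text{ in } S \mid \text{no } X_k\text{ in } T)\ge \prod_{k\in S}(1-p_k)\cdot(\text{correction})$ for smaller systems, and apply the same telescoping argument to the events in the neighbourhood of $\{i,j\}$. Since such bounds naturally carry factors $1-p_k\ge e^{-p_k/(1-p_k)}$, I expect the cleanest route is Janson's trick of proving the stronger, self-reinforcing statement $\mathbb{P}(X_k=1\mid \text{any conditioning set of zeros})\le p_k e^{\dots}$ by induction, together with the FKG-free estimate above.

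Getting the exact constant $e^{2\delta}$, rather than something like $e^{2\delta}/(1-\max p)$, is the delicate point. If it resists, I would fall back on citing \cite{Janson1998Suen} directly, since the theorem is quoted from there.
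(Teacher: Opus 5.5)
\textbf{The per-step reduction is false.} The paper gives no proof of this statement; it quotes it from Janson \cite{Janson1998Suen}, so your fallback is exactly what the paper does. Your self-contained argument, however, has a genuine gap. You claim it suffices to show $\mathbb{P}(X_i=1\mid A_{i-1})\ge p_i-e^{2\delta}\sum_{j<i,\,j\sim i}\mathbb{E}[X_iX_j]$, but this per-step inequality is false in general, so no way of completing it can work.

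Take $m=3$ with $1\sim 2\sim 3$. Let $X_1,X_3$ be independent Bernoulli variables with means $1-\eta$ and $p_3$, and put $X_2=(1-X_1)X_3$. This is a valid dependency graph, $\delta\le 2$, and $\mathbb{E}[X_2X_3]=\eta p_3$. But the event $A_2=\{X_1=X_2=0\}$ forces $X_3=0$, so $\mathbb{P}(X_3=1\mid A_2)=0<p_3(1-e^{2\delta}\eta)$ once $\eta<e^{-4}$. The theorem survives only because step $1$ gains far more than $p_1$, since $\mathbb{P}(X_1=0)=\eta$.

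The same example defeats your proposed repairs. Without Local-Lemma-type hypotheses, conditioning on some indicators being $0$ can inflate others arbitrarily: here $\mathbb{P}(X_2=1\mid X_1=0)=p_2/\eta$. So neither $\mathbb{P}(A_{i-1}\mid B_{ij})\ge e^{-2\delta}$ nor an inductive bound $\mathbb{P}(X_k=1\mid\text{zeros})\le p_k e^{O(\delta)}$ holds. With $i=3$, $j=2$, the event $B_{32}$ is trivial and $\mathbb{P}(A_2)=\eta(1-p_3)$. In addition, $\prod(1-p_k)\gtrsim e^{-\delta_i-\delta_j}$ fails whenever some $p_k$ is close to $1$.

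\textbf{The missing idea.} Never compare with actual avoidance probabilities; compare with the target bound, by induction over all subsets simultaneously. Write $\varphi(A)=\mathbb{P}(X_k=0\ \forall k\in A)$, let $\mu_A,\Delta_A$ be the restricted quantities, and let $\Gamma_i$ be the neighbourhood of $i$. Fix $i\in A$ and run your own decomposition. Use $\varphi(A\setminus(\{i\}\cup\Gamma_i))\ge\varphi(A\setminus\{i\})$ for the main term, and the independence of $(X_i,X_j)$ from $\{X_k\}_{k\notin\Gamma_i\cup\Gamma_j}$ for the error term. This gives
\[
\varphi(A)\le(1-p_i)\,\varphi(A\setminus\{i\})+\sum_{j\in A,\,j\sim i}\mathbb{E}[X_iX_j]\,\varphi\bigl(A\setminus(\Gamma_i\cup\Gamma_j)\bigr).
\]
Now bound both $\varphi$'s on the right by the induction hypothesis $\varphi(A')\le\exp(-\mu_{A'}+e^{2\delta}\Delta_{A'})$ for $A'\subsetneq A$. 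Since $i\sim j$, deleting $\Gamma_i\cup\Gamma_j$ removes $i$, lowers $\mu$ by at most $\delta_i+\delta_j\le 2\delta$ (this is where $e^{2\delta}$ comes from), and does not increase $\Delta_{A\setminus\{i\}}$. Then $(1-p_i)e^{p_i}\le 1$ and $1+x\le e^x$ give $\varphi(A)\le\exp(-\mu_A+e^{2\delta}\Delta_A)$. This route only ever needs upper bounds on avoidance probabilities, which are exactly what the induction supplies. Your telescoping route instead needs lower bounds on conditional avoidance probabilities, and these are false in this generality.
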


We will also need the following version of the Milnor-Thom theorem (see \cite{Milnor1964BettiRealVarieties,OleinikPetrovskii1949TopologyRealAlgebraicSurfaces,Thom1965HomologieVarietesAlgebriquesReelles}, as well as \cite{Matousek2002LecturesDiscreteGeometry} for an overview), which will help us determine the number of connected components in $\mathbb{E}^n$ determined by a collection of surfaces. This is often stated in the language of sign patterns; for a collection of polynomials $F_1,\dots,F_M, $ a \emph{sign pattern} is a vector $v \in \{-1,0,1\}^M$ where there exists $q \in \mathbb{E}^n$ such that $F_i(q)$ matches the sign of $v(i)$ for each $1 \le i \le M$. 
\begin{theorem}\label{thm:m_t}For \( M \geq N \geq 2 \), the number of sign patterns of \( M \) polynomials in \( N \) variables, each of degree at most \( D \), is at most 
$\left( \frac{50DM}{N} \right)^{N}.$
\end{theorem}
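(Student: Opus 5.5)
This is the classical Milnor--Thom/Warren bound, which the paper quotes rather than proves; the plan below is how I would reconstruct it. I reduce to counting connected components of complements of real hypersurfaces and then apply the Milnor--Thom (Oleinik--Petrovskii) bound on Betti numbers. The chain of reductions is: sign patterns with zeros $\to$ nonvanishing sign patterns of $2M$ perturbed polynomials $\to$ connected components of the complement of their zero sets $\to$ Betti numbers of intersections of at most $N$ hypersurfaces.

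\emph{Step 1 (removing zeros).} Fix the realized sign patterns $v_1,\dots,v_P$ of $F_1,\dots,F_M$, and pick witnesses $q_1,\dots,q_P \in \mathbb{E}^N$. Choose $\varepsilon>0$ smaller than every nonzero value $|F_i(q_k)|$. Now consider the $2M$ polynomials $F_i-\varepsilon$ and $F_i+\varepsilon$.
\begin{itemize}
\item None of them vanishes at any $q_k$.
\item The pair of signs $(\operatorname{sgn}(F_i(q_k)-\varepsilon),\operatorname{sgn}(F_i(q_k)+\varepsilon))$ is $(+,+)$, $(-,+)$ or $(-,-)$ according as $F_i(q_k)>0$, $=0$ or $<0$.
\end{itemize}
So distinct patterns $v_k$ give distinct nonvanishing sign patterns of the new family. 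It therefore suffices to bound the number of nonvanishing sign patterns of $M'=2M$ polynomials of degree at most $D$.

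\emph{Step 2 (Warren's bound).} Each such pattern is realized on a union of connected components of $\mathbb{E}^N\setminus\bigcup_j Z(G_j)$. So I want to show this complement has at most $(4eDM'/N)^N$ components, which is Warren's theorem. I would prove it as follows.
\begin{itemize}
\item After a generic perturbation $G_j\mapsto G_j-\delta_j$, the hypersurfaces are in general position, so any $j$ of them intersect in a smooth variety of dimension $N-j$. A small perturbation does not decrease the number of components.
\item Restrict to a large ball to make everything bounded.
\item Each bounded component of the complement contains a point extremal for a generic linear functional or for a generic perturbation. This charges each component to a component of some intersection $Z(G_{j_1})\cap\dots\cap Z(G_{j_s})$ with $s\le N$, or to a critical point of a small system.
\item Milnor--Thom bounds the total number of such components by $D(2D-1)^{N-1}\le (2D)^N$ per subset.
\end{itemize}
Summing over subsets gives at most
\[
\sum_{s=0}^{N}\binom{M'}{s}2^s D^N \le \left(\frac{eM'}{N}\right)^N (2\cdot 2D)^N
\]
components, up to bookkeeping of the factors of $2$. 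This is $(4eDM'/N)^N$.

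\emph{Step 3 (constants).} With $M'=2M$ the count becomes $(8eDM/N)^N$. Since $8e<22<50$, the stated bound $(50DM/N)^N$ follows with room to spare. The hypothesis $M\ge N$ is only needed so that the partial binomial sum is bounded by $(eM'/N)^N$.

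The main obstacle is Step 2: the charging argument from components of the complement to components of intersections of at most $N$ hypersurfaces, and getting general position via perturbation. I would follow Warren's original argument, for example as presented in Matou\v{s}ek's \emph{Lectures on Discrete Geometry}, which the paper cites. The generous constant $50$ means no sharpness is required, so crude estimates in the bookkeeping suffice.
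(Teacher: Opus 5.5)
Your outline is the standard Warren-style derivation from Milnor--Thom. The paper gives no proof of its own; it only cites Milnor, Thom and Matou\v{s}ek. The route is: remove zeros with the $F_i\pm\varepsilon$ trick, bound cells of the complement via Milnor--Thom on intersections of at most $N$ hypersurfaces in general position, and finish with $8e<50$. Step 1 and the final arithmetic are correct.

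Step 2, however, uses a false claim. A small shift $G_j\mapsto G_j-\delta_j$ \emph{can} decrease the number of components: $\{xy\neq 0\}$ has four components, but $\{xy\neq\delta\}$ has three for every $\delta\neq 0$. You do not need the claim. Choose $|\delta_j|<\min_k|G_j(q_k)|$; Sard still gives general position for almost every such $\delta$. Then the witnesses $q_k$ realize pairwise distinct nonvanishing sign patterns of the shifted family. So they lie in distinct cells of the shifted complement, and those are the cells you count. Warren's bound for the unshifted complement is true, but its proof shrinks two-sidedly to $\{|G_j|>\epsilon \text{ for all } j\}$ instead of shifting.

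The charging step is also off as described. Minimizing a generic linear functional $\ell$ over $\overline U$ charges the cell $U$ to a local minimum of $\ell$ on some $Z_J$. A single component of $Z_J$ can carry many such minima, each bounding a different cell on the same side. For example, the graph of $y=x^2(x^2-1)^2$ and the line $y=c$, $0<c<4/27$, bound three cells whose lowest points lie on the one component of the graph; this exceeds $2^{|J|}=2$. So that route needs a count of critical points, not Milnor--Thom's bound on components.

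The version that matches your sum is this. Take $J$ maximal with $\overline U\cap Z_J\neq\emptyset$.
By maximality, exactly the $G_j$ with $j\in J$ vanish at each point of $\overline U\cap Z_J$. By transversality, the complement there is locally a union of $2^{|J|}$ curved orthants whose closures all contain $Z_J$. Hence $\overline U\cap Z_J$ is open and closed in $Z_J$, so it is a union of components of $Z_J$. Each component of $Z_J$ is charged by at most $2^{|J|}$ cells. This gives $\sum_{|J|\le N}2^{|J|}$ times the number of components of $Z_J$, each at most $D(2D-1)^{N-1}$. That is your sum, whose middle term should read $2^s(2D)^N$ rather than $2^sD^N$.

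Lastly, restricting to a ball means adding the sphere $|x|^2=R^2$, with $R$ generic, as one more hypersurface. For $D=1$ this raises the per-subset bound to the degree-$2$ one, but the slack in the constant $50$ absorbs this.
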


For a centrally-symmetric convex body $V\subset \mathbb{E}^d$, we define the inradius $r(V)$ to be the supremum of the radii of all inscribed spheres, and the outradius $R(V)$ to be the infimum of the radii of all circumscribed spheres. The following lemma allows us to bound the volume of such a body in terms of its inradius and outradius.

\begin{lemma}\label{lem:conv_vol}
    Let $V \subset \mathbb{E}^n$ be a centrally symmetric convex body. Then, $$Vol(V) \le 2r(V)Vol_{n-1}(B_{R(V)}^{n-1})$$
    
\end{lemma}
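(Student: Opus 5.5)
Since $V$ is centrally symmetric and convex, we may translate it so that its center of symmetry is the origin. A symmetric convex body attains its inradius and outradius at balls centered at the origin: if $B_\rho(c)\subseteq V$, then $B_\rho(-c)\subseteq V$ by symmetry, and averaging with convexity gives $B_\rho(0)\subseteq V$. The same trick works for circumscribed balls, since $V\subseteq B_\rho(c)\cap B_\rho(-c)\subseteq B_\rho(0)$. Hence, up to taking suprema and infima (compactness gives attainment), $B_{r}^n\subseteq V\subseteq B_R^n$ with $r=r(V)$ and $R=R(V)$.

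The key step is to find a direction in which $V$ is thin. Let $\rho(u)=\sup\{t:tu\in V\}$ be the radial function of $V$ and $h(u)$ its support function. We have $r=\min_u h(u)$. Pick a unit vector $u$ achieving this minimum, so that $V$ lies in the slab $\{x: |\langle x,u\rangle|\le r\}$; central symmetry gives the lower side. This slab has width $2r$.

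Next, slice $V$ by hyperplanes orthogonal to $u$. Each slice $V\cap\{\langle x,u\rangle=s\}$ lies inside $B_R^n$ intersected with that hyperplane. That intersection is an $(n-1)$-ball of radius $\sqrt{R^2-s^2}\le R$, so its $(n-1)$-volume is at most $Vol_{n-1}(B_R^{n-1})$. By Fubini,
$$Vol(V)=\int_{-r}^{r}Vol_{n-1}\big(V\cap\{\langle x,u\rangle=s\}\big)\,ds\le 2r\,Vol_{n-1}(B_R^{n-1}).$$

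The main point to justify carefully is the claim that the inradius equals the minimal support value, $\min_u h(u)$, for a body symmetric about the origin. Clearly $B_\rho(0)\subseteq V$ if and only if $h(u)\ge\rho$ for all $u$, which follows from the separating hyperplane theorem. Combined with the centering argument above, this shows the supremum of inscribed radii is $\min_u h$. The minimum is attained because $h$ is continuous on the compact sphere, so the slab of half-width exactly $r(V)$ exists. If one prefers to avoid attainment, apply the argument with $h(u)<r(V)+\varepsilon$ and let $\varepsilon\to0$.
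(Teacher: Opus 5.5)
Your proof is correct and follows the paper's argument: put $V$ in the slab $\{x : |\langle x,u\rangle|\le r(V)\}$ for a direction $u$ minimizing the support function, then combine this with $V\subseteq B^n_{R(V)}$ via Fubini. The only differences are that you integrate over cross-sections orthogonal to $u$ rather than over fibers parallel to $u$ above the projection $\pi(V)$ (the same computation in the other order), and that you justify two facts the paper uses without comment: the inscribed and circumscribed balls can be taken centered at the center of symmetry, and $r(V)=\min_u h_V(u)$ is attained.
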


\begin{proof}
We start by translating $V$ so its center is at the origin, and let $h_V(u)=\max_{x\in V}\langle x,u\rangle$ denote the support function of $V$. Since $V$ is closed, convex, and centrally symmetric we can write it as $$V = \bigcap_{u \in S^{n-1}} \{x \in \mathbb{E}^n : |\langle x,u\rangle| \le h_V(u)\}.$$ Then, there must exist $u_0 \in S^{n-1}$ such that $h_V(u_0) = r(V)$, so by extension $$V \subseteq  \{x \in \mathbb{E}^n : |\langle x,u_0\rangle| \le r(V)\}.$$ Let $\pi:\mathbb R^n\to u_0^\perp$ be the orthogonal projection onto $u_0^\perp$. Every fiber of $\pi|_V$ is a line segment of length at most $2r(V)$, and therefore
\begin{align}\label{line:Vol1}
    Vol(V)\le 2r(V)\,Vol_{n-1}(\pi(V)).
\end{align}
Finally, since $V\subseteq B^n_{R(V)}$, we have
\begin{align}\label{line:Vol2}
\pi(V)\subseteq \pi(B^n_{R(V)})=B^{n-1}_{R(V)},
\end{align}
and combining (\ref{line:Vol1}) and (\ref{line:Vol2}) completes the claim.
\end{proof}

In the following sections, we will divide $\mathbb{E}^n$ into (connected) cells such that each cell has diameter $<1$. For a given cell $D$, we will let $z(D)$ denote the collection of cells $D' \neq D$ such that the minimum distance over all $q \in D, q'\in D'$ is at most $1$.

\begin{lemma}\label{lem:5sep}
    Let $q,q' \in \mathbb{E}^n$ be contained in cells $D,D'$ (respectively), and suppose $||q-q'|| \ge 5$. Then $(\{D\} \cup z(D)) \cap (\{D'\} \cup z(D')\} = \emptyset$.
\end{lemma}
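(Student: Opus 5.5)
We argue by contradiction, using only the triangle inequality and the standing assumption that every cell has diameter $<1$. Suppose some cell $E$ lies in both $\{D\} \cup z(D)$ and $\{D'\} \cup z(D')$. The plan is to build a short chain of points from $q$ to $q'$ passing through $E$ and bound each link, which gives $\|q-q'\| < 5$.

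First, fix any cell $E \in \{D\} \cup z(D)$ and any $e \in E$; we claim $\|q-e\| < 2$ (strict). If $E = D$, then $q,e \in D$ and $\|q-e\| \le \operatorname{diam}(D) < 1$. If $E \in z(D)$, there are points $a \in D$ and $b \in E$ with $\|a-b\| \le 1$. We then write
\[
\|q-e\| \le \|q-a\| + \|a-b\| + \|b-e\| < 1 + 1 + 1 = 3 .
\]
This only gives $3$, so we must be more economical. Instead of an arbitrary $e$, we use the specific witness points.

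The refined chain is as follows. Take $a \in D$ and $b \in E$ with $\|a-b\| \le 1$; in the case $E=D$, take $a=b=q$. Likewise take $a' \in D'$ and $b' \in E$ with $\|a'-b'\| \le 1$; in the case $E=D'$, take $a'=b'=q'$. Then
\[
\|q-q'\| \le \|q-a\| + \|a-b\| + \|b-b'\| + \|b'-a'\| + \|a'-q'\| .
\]
The first, third and fifth terms are distances within the single cells $D$, $E$ and $D'$ respectively, so each is $<1$. The second and fourth terms are at most $1$. Hence $\|q-q'\| < 1+1+1+1+1 = 5$. In the degenerate cases some terms vanish and the bound only improves. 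This contradicts $\|q-q'\| \ge 5$, so the two sets are disjoint.

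There are two points to check, and both are minor. The "minimum distance" in the definition of $z(D)$ must actually be attained, or at least be approachable. If cells are not closed, the infimum may not be attained. In that case we take $a,b$ with $\|a-b\| < 1+\varepsilon$, and we use that the cell diameters are at most some fixed $d<1$, or equivalently that the three strict inequalities leave slack. Then choosing $\varepsilon$ small still gives a total strictly below $5$. The only genuine subtlety is therefore strictness: the sum uses three strict inequalities ($<1$ for each diameter), so strictness survives any $\varepsilon$ loss as long as the diameters are bounded away from $1$, or are attained as suprema less than $1$.
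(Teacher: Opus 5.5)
Your argument is correct and is essentially the paper's proof: the same five-link triangle-inequality chain $q\to a\to b\to b'\to a'\to q'$ through the shared cell, with three within-cell links of length $<1$ and two adjacency links of length $\le 1$. Your version is slightly more complete, since the paper only treats a common cell $D^*\in z(D)\cap z(D')$ and leaves implicit the cases where the shared cell is $D$ or $D'$; you should, however, delete the abandoned opening claim $\|q-e\|<2$, which is false in general and plays no role.
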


\begin{proof}
Suppose for the sake of contradiction that there exists a cell $D^* \in z(D) \cap z(D')$. This implies there exist points $q_1 \in D, q_2,q_3 \in D^*$ and $q_4 \in D'$ such that $||q_1-q_2|| \le 1$ and $||q_3-q_4|| \le 1$. Furthermore, since each cell has diameter $<1$, we also have $||q-q_1|| <1$ and $||q_2-q_3|| < 1$ and $||q_4-q'|| < 1$. By the triangle inequality
$$||q-q'|| \leq ||q-q_1|| + ||q_1-q_2|| + ||q_2-q_3|| + ||q_3-q_4|| + ||q_4-q'|| < 5,$$ which is a contradiction.
\end{proof}

Finally, we will do a brief introduction to lattices. Given linearly independent vectors $v_1,\dots,v_d$ in $\mathbb{E}^n$, we refer to the set $$\Lambda := \{\sum_{i}a_iv_i : a_i \in \mathbb{Z}\}$$ as a \emph{lattice}. The integer $d$ is known as the \emph{rank} of the lattice, and if $d=n$ we say the lattice is \emph{full rank}. Henceforth we will deal only with full rank lattices. Now, any set of vectors $b_1,\dots,b_d$ where $$\Lambda = \{\sum_{i}a_ib_i : a_i \in \mathbb{Z}\}$$ is said to be a \emph{basis} for $\Lambda$. Given such a basis, the set $$P = \{\sum_{i}a_ib_i : 0 \le a_i < 1\}$$ is known as a \emph{fundamental parallelepiped} of $\Lambda$. Furthermore, the \emph{Voronoi cell} $V_q$ of a point $q \in \Lambda$ is defined as $$V_q :=\{q' \in \mathbb{E}^n : ||q-q'|| < ||q^*-q'|| \text{ for all } q^* \in \Lambda \setminus \{q\}\}.$$ Note that for any $q\in \Lambda$, $V_q$ is a translation of $V_0$. It is a fundamental fact of lattices that for any $q \in \Lambda$ and basis $b_1,\dots,b_n$, we have $Vol(V_q) = |\det(\Lambda)|$, where $\det(\Lambda)$ denotes the determinant of the matrix with columns $b_1,\dots,b_n$. Finally, the \emph{$i$th successive minimum} $\lambda_i$ of $\Lambda$ is defined to be the smallest positive real number such that $B_{\lambda_i}^n \cap \Lambda$ spans an $i$-dimensional subset of $\mathbb{E}^n$.

For the results of section \ref{subsec:bigd}, we will need to find a basis for a lattice that has relatively small basis vectors. The Lenstra-Lenstra-Lov\'asz algorithm, for example, gives a basis where the length of the basis vectors is bounded by the successive minima $\lambda_i$, see \cite[page $48$]{NguyenVallee2010LLL}. 

\begin{theorem}\label{thm:LLL}
    Given a (full rank) lattice $\Lambda \subset \mathbb{E}^n$, there exists a basis $b_1,\dots,b_n$ of $\Lambda$ where $||b_i|| \le 2^{(n-1)/2}\lambda_i$ for each $1 \le i \le n$.
\end{theorem}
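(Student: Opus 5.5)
This theorem is quoted from \cite{NguyenVallee2010LLL} and is not proved in the paper, but here is how I would prove it. The plan is to construct the basis by size-reducing a set of vectors that realize the successive minima, as in the standard proof that every lattice has a basis with $\|b_i\|$ bounded in terms of $\lambda_i$. (The LLL route gives the exact constant $2^{(n-1)/2}$; a more elementary route gives a weaker constant. I sketch the LLL route, since it matches the stated bound.)

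\textbf{Step 1: an LLL-reduced basis exists.} Start from any basis of $\Lambda$ and run the LLL procedure with parameter $\delta=3/4$. Alternate two operations. First, size-reduce: replace $b_i$ by $b_i-\lfloor\mu_{ij}\rceil b_j$ for $j<i$, where the Gram--Schmidt coefficients are $\mu_{ij}=\langle b_i,b_j^*\rangle/\|b_j^*\|^2$. Second, swap $b_i,b_{i+1}$ whenever the Lov\'asz condition $\|b_{i+1}^*\|^2\ge(3/4-\mu_{i+1,i}^2)\|b_i^*\|^2$ fails. Termination uses the potential $\prod_i \det(b_1,\dots,b_i)^2$. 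Each swap multiplies it by a factor less than $3/4$, and it is bounded below by a positive constant because $\Lambda$ is discrete: each factor is the squared covolume of a sublattice of rank $i$, and there are only finitely many possible values below any bound. So the procedure terminates. The output basis satisfies $|\mu_{ij}|\le 1/2$ and $\|b_{i+1}^*\|^2\ge \tfrac12\|b_i^*\|^2$.

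\textbf{Step 2: the key inequalities.} From Step 1, $\|b_j^*\|^2\le 2^{i-j}\|b_i^*\|^2$ for $j\le i$. Combining this with size reduction gives
\[\|b_j\|^2=\|b_j^*\|^2+\sum_{k<j}\mu_{jk}^2\|b_k^*\|^2\le 2^{j-1}\|b_j^*\|^2.\]
Next, take linearly independent lattice vectors $v_1,\dots,v_i$ with $\|v_k\|\le\lambda_k$. Some $v_k$ with $k\le i$ must have a nonzero component on some $b_m$ with $m\ge i$, since otherwise all $i$ of them would lie in the span of $b_1,\dots,b_{i-1}$. For that vector, $\|v_k\|\ge\|b_m^*\|$, because the last nonzero integer coefficient contributes its full Gram--Schmidt component. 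Hence
\[\lambda_i\ge \min_{m\ge i}\|b_m^*\|\ge 2^{-(m-i)/2}\|b_i^*\|\]
for the relevant $m$.

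\textbf{Step 3: combine, and the main obstacle.} The naive combination yields $\|b_i\|\le 2^{(n-1)}\lambda_i$-type constants, so the bookkeeping must be done carefully. Bound $\|b_j\|^2\le 2^{i-1}\|b_i^*\|^2$ for every $j\le i$. Then use the Step 2 argument to get $\|b_i^*\|^2\le 2^{n-i}\lambda_i^2$ by comparing against the smallest available $\|b_m^*\|$ with $m\ge i$. Multiplying,
\[\|b_i\|^2\le 2^{i-1}\cdot 2^{n-i}\lambda_i^2=2^{n-1}\lambda_i^2,\]
which is exactly the claimed bound $\|b_i\|\le 2^{(n-1)/2}\lambda_i$. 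The main obstacle is this exponent accounting, namely ensuring that the two geometric losses add up to $n-1$ rather than doubling. I expect it to go through via the chain $\|b_i\|^2\le 2^{i-1}\|b_i^*\|^2\le 2^{i-1}2^{m-i}\|b_m^*\|^2\le 2^{n-1}\lambda_i^2$.
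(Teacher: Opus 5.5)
Your proof is correct, and it is the standard Lenstra--Lenstra--Lov\'asz argument reproduced in the reference the paper cites (the paper itself gives no proof). The chain $\|b_i\|^2\le 2^{i-1}\|b_i^*\|^2\le 2^{m-1}\|b_m^*\|^2\le 2^{n-1}\lambda_i^2$ closes the argument with no obstacle left. Two small fixes: in Step 2, define $m$ as the \emph{largest} index with a nonzero coefficient in $v_k$, since that is what makes $\|v_k\|\ge\|b_m^*\|$ valid; and drop the opening sentence about size-reducing successive-minima vectors, since it describes a different route from the one you carry out.
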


Theorem \ref{thm:LLL} is almost certainly not optimal for our purposes, but this loss will not have a significant effect on the final bound. In the next section we will proceed with the proofs of theorems \ref{thm:main_bign} and \ref{thm:main_smalln}.

\section{Proofs of main theorems}\label{sec:mains}

The colorings provided for theorems \ref{thm:main_bign} and \ref{thm:main_smalln} share the following common structure. For each, we will start with a lattice $\Lambda$ in $\mathbb{E}^n$, and scale $\Lambda$ so its Voronoi cells have diameter $<1$. Then, we will fix a large integer $L$, and color $\mathbb{T}_{L\Lambda} := (\mathbb{E}^n/L\Lambda)$ in a way that avoids red copies of $\ell_2$, and blue copies of $K$. This extends to a coloring of $\mathbb{E}^n$ with the same properties. 

Note that since $L$ is an integer, we guarantee that the Voronoi cells defined by the lattice $\Lambda$ in $\mathbb{T}_{L\Lambda}$ are the same as they would be in $\mathbb{E}^n$. Denote by $\mathcal{D}_\Lambda$ this set of cells in $\mathbb{T}_{L\Lambda}$. We will end up coloring each point in $\mathbb{T}_{L\Lambda}$ according to which element of $\mathcal{D}_\Lambda$ it is contained in. For this reason, we would also like to assign points on the boundary to a given cell. For every $i < n$, we will take each $i$-face\footnote{For a discussion of the $i$-faces of a hyperplane arrangement, see \cite[Section $6.1$]{Matousek2002LecturesDiscreteGeometry}.}, and assign the whole face to one of its neighboring cells. This is important for technical reasons in the proofs of lemmas \ref{lem:lowd_ad} and \ref{lem:highd_ad}.

 Our coloring will be chosen as follows: we fix a probability $p$, and choose a subset $Q$ of $\mathcal{D}_\Lambda$ by choosing each element of $\mathcal{D}_\Lambda$ independently with probability $p$. Finally, we choose a subset $S$ of $Q$, where each cell $D \in Q$ is included in $S$ if none of the cells in $z(D)$ is in $Q$. The points in cells in $S$ are those that we will color red. We note a few properties of these colorings.

\begin{enumerate}
    \item In any such coloring, there will be no red copy of $\ell_2$. This follows directly from the fact that the Voronoi cells have diameter $< 1$, and the definition of $S$.
    \item Let $u_\Lambda$ be the shortest vector in $\Lambda$; we will choose $L$ such that $L||u_\Lambda|| > R_K+4$. This guarantees first that the points in any copy of $K$ will all be in different cells. To see this, observe that the only way that two points can be in the same cell in $\mathbb{T}_{L\Lambda}$ is if they are distance $< 1$ apart, or distance at least $L||u_\Lambda||-1$ apart. The first cannot happen since $K$ is $1$-separated, and the second cannot happen by our choice of $L$, since $K$ has diameter $R_K$. In a similar manner, this condition will guarantee that the number of points at distance $< 5$ from any point of a copy of $K$ in $\mathbb{T}_{L\Lambda}$ is the same as it would be in $\E^n$. We will use this in lemmas \ref{lem:R2_allblue} and \ref{lem:high_d_prob}.
    \item For each $D \in \mathcal{D}_\Lambda$, the probability that $D \in S$ (and thus that the points of $D$ are colored red) is precisely $p(1-p)^{|z(D)|}$. Moreover, since $\Lambda$ is a lattice, $|z(D)|$ is the same for each cell $D$.
\end{enumerate}

The remainder of the proofs will be dedicated to showing that there exists such a coloring with no blue copy of $K$. This will proceed in two parts: first we will show that there are not too many collections of cells in $\mathcal{D}_\Lambda$ that can contain a copy of $K$, and second we will show that the probability that such a collection is colored all-blue is small. We will do this first for the case $K = \ell_m$ and $n=2$ in section \ref{subsec:low_d}, and then for the general case in \ref{subsec:bigd}.

\subsection{Case $n=2$ and $K =\ell_m$}\label{subsec:low_d}

In this section we prove theorem \ref{thm:main_smalln}. To start, we choose a small $\epsilon > 0$, and let $\Lambda$ denote the lattice in $\mathbb{E}^2$ spanned by the vectors $b_1:=(1-\epsilon)v_1$ and $b_2 :=(1-\epsilon)v_2$, where 
\[
v_1 := (3/2,0),\,v_2:=(3/4,\sqrt{3}/4).
\]
Note that this is a hexagonal lattice; that is, the Voronoi cells of $\Lambda$ are regular hexagons tiling the plane. Our choice of scaling ensures that each hexagon has diameter $(1-\epsilon) < 1$. Set $L = 3m$. As described in the previous section, we need that $L||u_\Lambda|| > m+4$. For this choice of $\Lambda$ we have that $u_\Lambda$ has length $\ge 1/2$ (assuming $\epsilon$ is chosen to be small enough), so this condition is indeed satisfied.

 As described in the previous section, we will now show that there are not too many collections of cells that can contain a copy of $\ell_m$. Let $q_1,\dots,q_m$ denote the points of $\ell_m$. We say that a collection of cells $D_1,\dots,D_m \subset \mathcal{D}_\Lambda$ is \emph{admissible} if there is a copy of $\ell_m$ in $\mathbb{T}_{L\Lambda}$ where $q_i \in D_i$.

\begin{lemma}\label{lem:lowd_ad}
    The number of admissible collections of cells is at most $2^45^{16}m^8$
\end{lemma}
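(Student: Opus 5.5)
We bound the number of admissible collections through the space of congruent copies of $\ell_m$ and the Milnor--Thom theorem (theorem \ref{thm:m_t}). A copy of $\ell_m$ in $\mathbb{E}^2$ is determined by its first point $q_1=(x,y)$ and a unit direction $(c,s)$ with $c^2+s^2=1$, so that $q_i=(x,y)+(i-1)(c,s)$. Since the coloring lives on $\mathbb{T}_{L\Lambda}$, we first reduce $q_1$ modulo $L\Lambda$, and then also modulo $\Lambda$ itself. Translating by a vector of $\Lambda$ maps admissible collections bijectively to admissible collections, with each cell shifted by the same lattice vector. It therefore suffices to count admissible collections with $q_1$ in a fixed fundamental cell, namely the Voronoi cell $V_0$, and multiply by the number of cells $|\mathcal{D}_\Lambda|=L^2=9m^2$. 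Alternatively, we keep $q_1\in V_0$ and count the remaining choices directly. We use the three parameters $(x,y,\phi)$, or $(x,y,c,s)$ with the constraint $c^2+s^2=1$ handled by substituting a rational parametrization such as $c=(1-t^2)/(1+t^2)$, $s=2t/(1+t^2)$, which costs only one missing direction.

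The cell containing $q_i$ is determined by the signs of the linear functions defining the hexagonal Voronoi cells. Each cell is cut out by the half-planes $\langle q-\lambda, w_j\rangle$ compared with thresholds, and the boundary lines of the hexagonal tiling form three families of parallel lines (in the triangular arrangement dual to the hexagons, together with the segment structure). The point $q_i$ lies within distance about $m$ of $V_0$, so only $O(m)$ lines from each family are relevant. The cell of every $q_i$ is therefore determined by the sign pattern of polynomials of the form
$$\langle (x,y)+(i-1)(c,s),\,w\rangle-\beta ,$$
where $w$ ranges over the (at most three, or six) edge normals and $\beta$ over $O(m)$ offsets. After clearing the denominator $1+t^2$, these are polynomials of degree at most $2$ in the three variables $(x,y,t)$. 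Because of the boundary convention, where each face is assigned to a single neighboring cell, the assignment $q\mapsto D$ is a function of the full sign vector, zeros included. This is exactly why sign patterns with $0$ entries matter, and theorem \ref{thm:m_t} counts those as well. Collections of polynomials with a common sign pattern give the same admissible collection, so the count is at most the number of sign patterns. The family has $M$ polynomials, with $M$ of order $m\cdot m=m^2$, since $O(m)$ offsets are paired with the index $i\le m$, but only products $(i-1)$ times direction. Equivalently, one can regroup into $M\lesssim m^2$ distinct polynomials in $N=3$ variables. Theorem \ref{thm:m_t} then gives at most $(50\cdot 2\cdot M/3)^3\lesssim m^6$ patterns. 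Multiplying by the $9m^2$ choices of starting cell gives a bound of order $m^8$, matching the exponent in the statement.

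The main obstacle is the bookkeeping that produces the exact constant $2^45^{16}$. This requires choosing precisely which linear forms to use (for the hexagonal tiling, three directions of lattice hyperplanes, possibly refined to capture the hexagon boundaries), counting exactly how many offsets each $q_i$ can see, and tracking the degree after the rational parametrization. I would first try to write each hexagon as an intersection of the triangles formed by the three families of lines, so that the cell of $q_i$ is determined by the integer parts of three linear forms. That would give $M\le 3\cdot(\text{number of lines})\cdot m$ polynomials of degree $2$ in $3$ variables, and I would then tune the counts until $(100M/3)^3\cdot 9m^2\le 2^4 5^{16}m^8$. I would also have to handle separately the excluded direction of the parametrization, for instance by rotating the parametrization or by adding a second chart, at the cost of a factor of $2$.
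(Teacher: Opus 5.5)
Your argument is correct in outline and takes a different route from the paper, with one slip. After you multiply $\langle (x,y)+(i-1)(c,s),w\rangle-\beta$ by $1+t^2$, the terms $(w_1x+w_2y)t^2$ have degree $3$, not $2$. So theorem \ref{thm:m_t} must be applied with $D=3$, which gives $(50M)^3$ patterns rather than $(100M/3)^3$.

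This is harmless, and the constant you left as bookkeeping closes with plenty of room. With $q_1$ in a fixed cell, every $q_i$ lies within distance $m$ of that cell's center. The three families of edge-lines have spacing at least $1/3$, so at most $3(6m+1)\le 21m$ lines matter, and $M\le 21m^2$. Two charts and $9m^2$ starting cells then give at most $18m^2(1050m^2)^3\approx 2.1\cdot 10^{10}m^8$, which is below $2^45^{16}m^8\approx 2.4\cdot 10^{12}m^8$.

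Your translation step also tacitly assumes the boundary-face assignment is $\Lambda$-periodic, which the paper never stipulates. You can avoid this by summing over the $9m^2$ possible cells $D_1$ and running the same count for each.

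The paper instead treats the four coordinates of $q_1',q_2'$ as free variables and ignores the constraint $\|q_1'-q_2'\|=1$, so every $F_i(q_j')$ is linear. It lets one point of the copy range over the whole fundamental domain $P_0$ of $L\Lambda$ and uses the at most $100m$ lines within $5m$ of the origin. With $M=100m^2$, $N=4$, $D=1$ it gets exactly $(50\cdot 100m^2/4)^4=(1250m^2)^4=2^45^{16}m^8$.

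So the paper's route is simpler (linear forms, no charts) and produces the stated constant exactly. Yours enforces the unit-length constraint to save a variable, then spends that saving on the factor $9m^2$. That factor is unnecessary: the number of relevant lines is $\Theta(m)$ whether $q_1$ is confined to one cell or ranges over $P_0$. Using your three-variable parametrization with $q_1'\in P_0$ directly bounds the number of admissible collections by $2(5000m^2)^3=O(m^6)$. That is stronger than the lemma and would lower the threshold in theorem \ref{thm:main_smalln}.
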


\begin{figure}
    \centering \includegraphics[scale=.5]{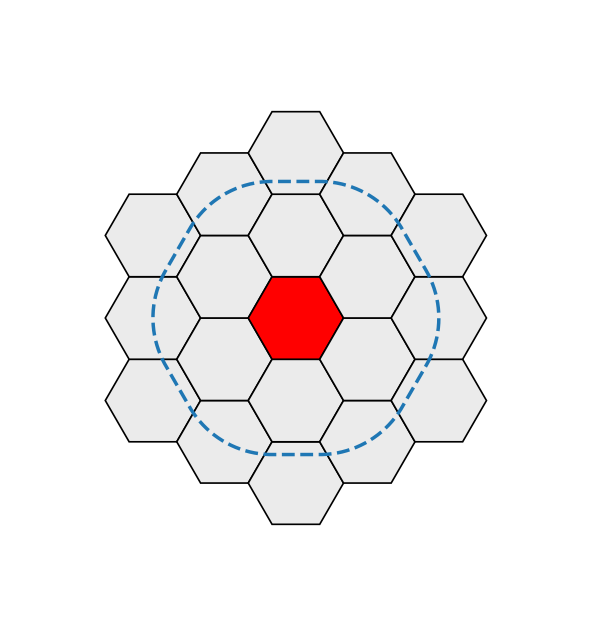}
    \caption{A cell $D$ colored red. The cells touching the interior of the dotted region are those that are included in $z(D)$. We can see that $|z(D)| = 18$.}
    \label{fig:hex}
\end{figure}

\begin{proof}

Let $P_0$ denote a translation of the fundamental parallelepiped of $L\Lambda$ (with basis \{$Lb_1$, $Lb_2$\}) so that its center is at the origin; that is $$P_0 = \{\sum_i a_iLb_i : -1/2 \le a_i < 1/2\}.$$ Suppose we have an admissible collection of cells $D_1,\dots,D_m$ with a corresponding copy of $\ell_m$ such that $q_i \in D_i$. We will construct a set of Voronoi cells $D_1',\dots,D_m'$ of $\Lambda$ in $\mathbb{E}^2$ as follows: Let $\ell_m' = \{q_1',\dots,q_m'\}$ be a copy of $\ell_m$ in $\mathbb{E}^2$ where each $q_i'$ is in the equivalence class of $q_i$, and at least one point from $\ell_m'$ is in $P_0$. Let $D_i'$ be the cell in $\mathbb{E}^2$ which contains $q_i'
$. We make the following observations.

\begin{enumerate}
    \item Every point in $P_0$ is within distance $7m/2$ of the origin, and since $\ell_m'$ has diameter $m-1$, every point in $\ell_m'$ is within distance $9m/2$ of the origin, so every point of every $D_i'$ is within distance $5m$ of the origin.
    \item The hexagonal Voronoi cells of $\Lambda$ have walls bounded by lines in $3$ directions, where the lines in each direction are spaced $\ge 1/3$ apart (assuming $\epsilon$ is chosen to be sufficiently small). Thus, the total number of lines bounding Voronoi cells within $5m$ of the origin is at most $2 \times 3 \times 3 \times5m \le 100m$. We denote these lines $F_1,\dots,F_s$.
    \item We let $q_1' = (x_1,y_1)$ and $q_2' = (x_2,y_2)$, and note that each other $q_i'$ can be expressed as a linear combination of $q_1'$ and $q_2'$. Thus, $F_i(q_j')$ for $1 \le i \le s $ and $1 \le j \le m$ corresponds to a sign pattern of at most $100m^2$ hyperplanes in variables $x_1,x_2,y_1,y_2$. Moreover, from this sign pattern, one can uniquely recover $D_1',\dots,D_m'$ and thus $D_1,\dots,D_m$. Here, we have used the fact from section \ref{sec:mains} that we assigned each lower-dimensional face to exactly one of its neighboring cells.

\end{enumerate}

Thus, the number of sign patterns of this arrangement will serve as an upper bound on the number of admissible collections of cells. By theorem \ref{thm:m_t}, the number of such sign patterns is at most $2^45^{16}m^8$, thus completing the proof.

\end{proof}
For the next lemma, we note that for any cell $D \in \mathcal{D}_\Lambda$, we have $|z(D)| = 18$. See Figure \ref{fig:hex} for an illustration of this. Furthermore, for the remainder of the proof we set $p=\frac{1}{19}$.

\begin{lemma}\label{lem:R2_allblue}
    The probability that an admissible collection of cells is all colored blue is at most $$ \exp (-0.01557m)$$
\end{lemma}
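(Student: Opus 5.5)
We bound the all-blue probability with Suen's inequality (Theorem \ref{thm:Suen}). Fix an admissible collection $D_1,\dots,D_m$ coming from a copy $q_1,\dots,q_m$ of $\ell_m$. By property 2 of Section \ref{sec:mains}, the cells are distinct. All $D_i$ are blue exactly when none of them lies in $S$. So we let $X_i$ be the indicator that $D_i \in S$ and aim to bound $\mathbb{P}(X=0)$ for $X=\sum_i X_i$.

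By property 3, $p_i = p(1-p)^{18}$ with $p=1/19$, which gives $\mu = m\cdot\frac{1}{19}(18/19)^{18}\approx 0.01996\,m$. The event $X_i$ is determined by the choices made for the cells in $\{D_i\}\cup z(D_i)$. Hence we may take the dependency graph in which $i\sim j$ whenever these neighbourhoods intersect. By Lemma \ref{lem:5sep}, this forces $\|q_i-q_j\|<5$, i.e.\ $|i-j|\le 4$. For each $i$ there are therefore at most $8$ neighbours $j$, so $\delta \le 8p(1-p)^{18}\approx 0.16$.

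The key structural observation concerns adjacent pairs. If $D_j\in z(D_i)$, then $X_iX_j=0$, because $D_i\in S$ forbids $D_j\in Q$. This happens whenever $|i-j|=1$, since $\|q_i-q_j\|=1$ and the cells are at distance $\le 1$. For the remaining dependent pairs ($|i-j|\in\{2,3,4\}$), we bound
\[
\mathbb{E}[X_iX_j] = p^2(1-p)^{|z(D_i)\cup z(D_j)\setminus\{D_i,D_j\}|} \le p^2(1-p)^{18},
\]
using only that the union contains at least $18$ cells besides $D_i$ and $D_j$. This can be sharpened by counting the union of the two neighbourhoods more carefully, which I expect will be needed.

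This gives $\Delta \le 3m\,p^2(1-p)^{18}$, so $\Delta e^{2\delta}\le 3m\,p^2(1-p)^{18}e^{0.32}$. Then $\mu-\Delta e^{2\delta}\approx 0.01996m - 0.0044m$, which yields a constant around $0.0156$. This is consistent with the claimed $0.01557$.

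The main obstacle is making these constants tight enough. There are two points to check. First, we must verify that neighbourhoods truly overlap only for $|i-j|\le 4$. Second, we must bound $|z(D_i)\cup z(D_j)|$ for $|i-j|=2,3,4$ in the hexagonal geometry; for $|i-j|\ge 3$ the union is much larger. If the crude bound above falls short, we would refine it by bounding $\mathbb{E}[X_iX_j]\le p^2(1-p)^{k}$ with $k$ the minimum union size over all placements. We would also use $\delta$ computed with only $8$ neighbours, and then evaluate numerically to reach $\exp(-0.01557m)$.
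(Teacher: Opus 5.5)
Your proposal is correct and follows the paper's proof: Suen's inequality with $\mu = mp(1-p)^{18}$, adjacent pairs contributing $0$, the crude bound $\mathbb{E}[X_iX_j]\le p^2(1-p)^{18}$ for $2\le |i-j|\le 4$, $\Delta\le 3mp^2(1-p)^{18}$, and $\delta\le 8p(1-p)^{18}$. No sharper union count is needed, but the margin is tiny, so compute exactly: $\mu\approx 0.019888m$ (not $0.01996m$), $\delta\approx 0.15910$ and $\Delta e^{2\delta}\approx 0.004317m$ give $-\mu+\Delta e^{2\delta}\approx -0.015571m$, whereas rounding $\delta$ up to $0.16$ would give only about $-0.015563m$, which misses the claimed constant.
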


\begin{proof}
    The proof will proceed by lemma \ref{thm:Suen}. We let $D_1,\dots,D_m$ be an admissible collection of cells with points $q_1,\dots,q_m$ forming a copy of $\ell_m$ with $q_i \in D_i$. Furthermore, we let $X_i$ be the indicator variable for the event that $D_i$ is colored red, with $X = \sum_i X_i$. Thus, $X=0$ corresponds to the event that all cells are colored blue. We note that $\mathbb{E}[X_i] = p(1-p)^{18}$ for each $i$, and so $\mu = \E[X] = mp(1-p)^{18}$. Now, for any $1 \le i,j \le m,$ we say $i \sim j$ if $|i-j| < 5$. We note that this is in fact a dependency graph by the definition given in the setup of theorem \ref{thm:Suen}. To see this, let $I,J \subset [m]$ where $|i-j| \ge 5$ for all $i \in I$ and $j \in J$. We note that by lemma \ref{lem:5sep}, $$\left(\bigcup_{i \in I} \left(\{D_i\} \cup z\left(D_i\right)\right)\right) \bigcap \left(\bigcup_{j \in J} \left(\{D_j\} \cup z\left(D_j\right)\right)\right) = \emptyset.$$ Since $\{X_i\}_{i \in I}$ depends only on the cells in  $\cup_{i\in I} (\{D_i\} \cup z(D_i))$, and $\{X_j\}_{j \in J}$ depends only on the cells in $\cup_{j \in J} (\{D_j\} \cup z(D_j))$, we conclude that $\{X_i\}_{i \in I}$ and $\{X_j\}_{j \in J}$ are independent.

    It remains to calculate $\Delta$ and $\delta$. We note that, for each $ 1\le i \le m-1$, since $||q_i - q_{i+1}|| = 1$, we have $\E[X_iX_{i+1}] = 0$. Furthermore, by Lemma \ref{lem:5sep}, if $|i-j| \ge 5$, then $i \not \sim j$. Finally, we note if $2 \le |i-j| \le 4$, then $\E[X_iX_{j}] \le p^2(1-p)^{18}$. Therefore, we calculate

\begin{align*}
    \Delta = \sum_{i \sim j} \mathbb{E}[X_iX_j] \le (1/2)m(6p^2(1-p)^{18}) \le 3mp^2(1-p)^{18}
\end{align*}
and furthermore, for a fixed $i$ we have 

\begin{align*}
    \delta_i = \sum_{i \sim j}p(1-p)^{18} \le 8p(1-p)^{18}.
\end{align*}
Thus $\delta \le 8p(1-p)^{18}$ as well. Now, by theorem \ref{thm:Suen}, we get

$$\mathbb{P}(X=0) \le \exp(-\mu+\Delta e^{2\delta})$$
and a quick calculation with $p=1/19$ tells us that $-\mu+\Delta e^{2\delta} \le -0.01557m$, which completes the proof.

\end{proof}

To complete the proof of theorem \ref{thm:main_smalln}, we note if a coloring has no admissible collection of cells colored all blue, then it contains no all-blue $\ell_m$. Combining lemmas \ref{lem:lowd_ad} and \ref{lem:R2_allblue}, the expected number of admissible collections of cells that are colored all blue is bounded above by $$\exp (-0.01557m)2^45^{16}m^8.$$ A quick calculation shows that for $m \ge 6330$, the above value is $<1$. Therefore, there exists a two-coloring of $\mathbb{E}^2$ avoiding red copies of $\ell_2$ and blue copies of $\ell_{6330}$.

\subsection{General Case}\label{subsec:bigd}

In this section we will prove theorem \ref{thm:main_bign}. Let $K \subset \mathbb{E}^n$ be a $1$-separated set, and let $C_K$ be an integer such that the number of points in $K$ of distance $<5$ from any $q \in K$ is at most $C_K$. We will want a lattice $\Lambda$ with good covering properties. The \emph{covering radius} $r_\Lambda$ of $\Lambda$ is defined to be the smallest positive real number such that any point of $\mathbb{E}^n$ is within distance $r_\Lambda$ of some point of $\Lambda$. We are interested in lattices where the Voronoi cells have similar volume to the $n$-dimensional ball with radius $r_\Lambda$. For the state of the art, see a very recent paper of Li and Liu \cite{LiLiu2026} (and before that \cite{GaoLiuPikhurkoSun} of Gao, Liu, Pikhurko and Sun), improving on a classical result of Rogers \cite{Rogers1959LatticeCoverings}. In this paper, they construct lattices $\Lambda$ with covering radius $r_\Lambda$, where 

\begin{align*}
\frac{Vol(B_{r_\Lambda}^n)}{|\det(\Lambda)|} \lesssim n(\log n)^{1+o(1)}.
\end{align*}
We will not need such a strong bound, and will instead use the cruder estimate $\lesssim n^2$ for simplicity in our upcoming calculations. We take such a lattice $\Lambda$, scaled so $r_\Lambda= (1-\epsilon_n)/2$ for $\epsilon_n > 0$ small enough that we still have 
\begin{align}\frac{Vol(B_{1/2}^n)}{n^2} \lesssim |\det(\Lambda)|.\label{line:cel_vol}
\end{align}
Note that this also implies the diameter of each Voronoi cell is $< 1$. Furthermore, it is not too hard to see that for each successive minimum $\lambda_i$ of $\Lambda$, we have that $\lambda_i \le 2r_\Lambda\le 1$. Thus, theorem \ref{thm:LLL} guarantees us a basis $B = \{b_1,\dots,b_n\}$ where  $||b_i|| \le 2^{n/2}$ for each $i$. In the upcoming two lemmas, we prove some basic facts about our lattice.

\begin{lemma}\label{lem:short_vec}The shortest vector in $\Lambda,$ $u_\Lambda$ has length $\gtrsim\frac{1}{n^3}$. Equivalently, $\Lambda$ is $\gtrsim\frac{1}{n^3}$-separated.
\end{lemma}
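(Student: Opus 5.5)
We will prove the bound by comparing the volume of a Voronoi cell with a slab estimate coming from a short lattice vector. Let $u = u_\Lambda$ be a shortest nonzero vector of $\Lambda$, and write $s = \norm{u}$. The Voronoi cell $V_0$ is a centrally symmetric convex body. Since $V_0 \subseteq \{x : \norm{x} \le \norm{x-u}\}\cap\{x : \norm{x}\le \norm{x+u}\}$, we get $|\langle x, u/s\rangle| \le s/2$ for every $x \in V_0$. So $V_0$ lies in a slab of width $s$, and its inradius satisfies $r(V_0) \le s/2$.

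Next we bound the outradius. Every point of $V_0$ is within distance $r_\Lambda$ of $0$, because points of $V_0$ are closer to $0$ than to any other lattice point, and the covering radius bounds the distance to the nearest one. Hence $R(V_0) \le r_\Lambda < 1/2$. Lemma \ref{lem:conv_vol} then gives
\begin{align*}
|\det(\Lambda)| = Vol(V_0) \le 2r(V_0)\,Vol_{n-1}(B^{n-1}_{1/2}) \le s\, Vol_{n-1}(B^{n-1}_{1/2}).
\end{align*}
Strictly speaking, the slab argument directly bounds the width of $V_0$ in direction $u$. Applying the fiber argument from the proof of Lemma \ref{lem:conv_vol} with $u_0 = u/s$ gives the same inequality, so we do not even need to identify the true inradius.

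Combining this with (\ref{line:cel_vol}) yields
\begin{align*}
s \gtrsim \frac{Vol(B^n_{1/2})}{n^2\, Vol_{n-1}(B^{n-1}_{1/2})}.
\end{align*}
The remaining step is the ratio of ball volumes. We have $Vol(B^n_{1/2})/Vol_{n-1}(B^{n-1}_{1/2}) = \tfrac12 \cdot \omega_n/\omega_{n-1}$, where $\omega_k$ is the volume of the unit $k$-ball. From $\omega_n/\omega_{n-1} = \sqrt{\pi}\,\Gamma(\tfrac{n+1}{2})/\Gamma(\tfrac n2+1)$ and Gautschi's inequality, this ratio is $\asymp n^{-1/2}$, and certainly $\gtrsim 1/n$. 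That gives $s \gtrsim n^{-5/2} \ge n^{-3}$, as claimed.

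The claim that $\Lambda$ is $\gtrsim n^{-3}$-separated is equivalent, since the distance between two lattice points is the length of their difference, which is a nonzero lattice vector. The main point needing care is the slab containment together with the volume-ratio estimate. The other steps are direct applications of Lemma \ref{lem:conv_vol} and (\ref{line:cel_vol}).
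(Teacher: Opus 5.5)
Your proposal is correct and follows essentially the same route as the paper. Both arguments bound the inradius of $V_0$ by $\norm{u_\Lambda}/2$ and its outradius by $1/2$, then combine Lemma \ref{lem:conv_vol} with (\ref{line:cel_vol}). You also make explicit the ratio $Vol(B^n_{1/2})/Vol_{n-1}(B^{n-1}_{1/2}) \asymp n^{-1/2}$, which the paper leaves implicit; the paper only needs this ratio to be $\gtrsim 1/n$, and your sharper estimate gives the slightly stronger bound $n^{-5/2}$.
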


\begin{proof}
    Let $V_0$ denote the Voronoi cell of $0$. The Voronoi cell is a centrally-symmetric convex body, where $r(V_0) = ||u_\Lambda||/2$. Furthermore, $R(V_0) \le 1/2,$ since the diameter of each Voronoi cell is $< 1$. Thus, we may use lemma \ref{lem:conv_vol} and (\ref{line:cel_vol}) to get $$\frac{Vol(B_{1/2}^n)}{n^2} \lesssim|\det(\Lambda)| = Vol(V_0) \lesssim ||u_\Lambda||Vol(B_{1/2}^{n-1}),$$ and we conclude $$||u_\Lambda|| \gtrsim\frac{1}{n^3}. $$
\end{proof}

\begin{lemma}\label{lem:z_size_K}
    For any Voronoi cell $D$ in $\mathcal{D}_\Lambda$, we have that $|z(D)| \lesssim 5^n n^2$
\end{lemma}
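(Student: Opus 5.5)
A volume argument should do it. Fix $D = V_q$ for some $q\in\Lambda$. Every cell $D'\in z(D)$ is some $V_{q'}$ with $q'\neq q$. By definition there are points $x\in D$ and $x'\in D'$ with $\|x-x'\|\le 1$. Every cell has diameter $<1$, and it is contained in the ball of radius $r_\Lambda=(1-\epsilon_n)/2<1/2$ around its lattice point. So for any $y\in D'$ we get
\[
\|y-q\| \le \|y-x'\|+\|x'-x\|+\|x-q\| < 1+1+\tfrac12 = \tfrac52 .
\]
Hence every cell of $z(D)$ lies inside the ball $B^n_{5/2}(q)$.

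Next I would use the fact that the Voronoi cells are disjoint and each has volume $|\det(\Lambda)|$. The cells in $\{D\}\cup z(D)$ are distinct, so their volumes add up to at most $Vol(B^n_{5/2})$. This gives
\[
|z(D)|+1 \le \frac{Vol(B^n_{5/2})}{|\det(\Lambda)|} \lesssim \frac{n^2\, Vol(B^n_{5/2})}{Vol(B^n_{1/2})} = n^2 5^n,
\]
where the middle step uses (\ref{line:cel_vol}). Up to the absolute constant, this is exactly the claim.

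Two technical points need care. First, the cells live in $\mathbb{T}_{L\Lambda}$ rather than $\mathbb{E}^n$. Lifting to $\mathbb{E}^n$ near a representative of $q$, distinct cells of the torus within distance $5/2$ of $q$ correspond to distinct cells in $\mathbb{E}^n$, provided $L\|u_\Lambda\|$ is large, say $L\|u_\Lambda\|>R_K+4\ge 6$. That condition is guaranteed by the choice of $L$. Second, the boundary assignments of lower-dimensional faces have measure zero, so they affect neither the volumes nor the containment (closures still lie in the closed ball).

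The only mildly delicate step is the containment bound. One must use the covering radius $<1/2$ for the bound $\|x-q\|<1/2$, rather than just the diameter $<1$. Using only the diameter would give radius $3$, hence $6^n$, which is too weak.
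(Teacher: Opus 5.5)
Your proposal is correct and matches the paper's proof: you show every cell of $z(D)$ lies in $B^n_{5/2}(q)$ by the triangle inequality, then divide $Vol(B^n_{5/2})$ by the common cell volume and apply (\ref{line:cel_vol}). One small correction to your closing remark: the bound $\|x-q\|<1/2$ already follows from diameter $<1$ alone, because $V_q$ is centrally symmetric about $q$, so the diameter does not only give radius $3$.
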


\begin{proof}
    Let $q$ be the center of $D$. We claim that any cell $D' \in z(D)$ is contained in $B_{5/2}^n(q)$. Note that by definition of $z(D)$, there exists a point $q
    _1\in D$ and another point $q_2 \in D'$ such that $||q_1-q_2|| \le 1$. Take an arbitrary point $q_3 \in D'$. Since $D,D'$ have diameter less than $1$, we know that $||q_2 - q_3|| < 1$ and $||q-q_1|| < 1/2$. Thus by the triangle inequality $$||q_3-q|| \le ||q - q_1|| + ||q_1 - q_2|| + ||q_2 - q_3|| < 1/2 + 1 + 1 = 5/2$$
     which shows that $D' \subset B_{5/2}^n(q)$. All Voronoi cells have the same volume, so by a volume argument and using (\ref{line:cel_vol}) we conclude $$|z(D)| \le \frac{Vol(B_{5/2}^n(q))}{Vol(D)} \lesssim \frac{Vol(B_{5/2}^n(q))n^2}{Vol(B_{1/2}^n)} \lesssim 5^{n} n^2.$$
\end{proof}

As before, we need to choose an integer $L$ such that $L||u_\Lambda|| \ge R_K + 4$; in particular, by Lemma \ref{lem:short_vec} choosing some $L \lesssim R_Kn^3$ will suffice. Now, we can move on to the final part of the proof. In particular, we do the same as in section \ref{subsec:low_d}; that is, give an upper bound on the number of admissible collections of cells, and give an upper bound on the probability that an admissible collection is colored all-blue. As a reminder, a collection of Voronoi cells $D_1,\dots,D_{|K|}$ in $\mathcal{D}_\Lambda$ is called admissible if there exists a copy of $K = \{q_1,\dots,q_{|K|}\}$ with $q_i \in D_i$ for all $1 \le i \le |K|$. In this lemma we will aim for simplicity instead of optimal bounds, since this will not have a significant effect on the final result.

\begin{lemma}\label{lem:highd_ad}
    The number of admissible collections of cells is $\lesssim R_K^{2n^3}4^{3n^4}|K|^{2n^2}$ 
\end{lemma}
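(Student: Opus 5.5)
We follow the strategy of lemma \ref{lem:lowd_ad}: lift an admissible collection to $\mathbb{E}^n$, parametrize copies of $K$ by few real variables, and bound the number of admissible collections by the number of sign patterns of a family of hyperplanes, using theorem \ref{thm:m_t}.

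\textbf{Lifting to $\mathbb{E}^n$.} Let $P_0=\{\sum_i a_iLb_i : -1/2\le a_i<1/2\}$ be the centered fundamental parallelepiped of $L\Lambda$, where $b_1,\dots,b_n$ is the short basis from theorem \ref{thm:LLL}. Given an admissible collection with copy $\{q_1,\dots,q_{|K|}\}$ of $K$ in $\mathbb{T}_{L\Lambda}$, we lift it to a copy $\{q_1',\dots,q_{|K|}'\}$ in $\mathbb{E}^n$ with $q_1'\in P_0$. Since $\|b_i\|\le 2^{n/2}$, every point of $P_0$ lies within $nL2^{n/2}$ of the origin. Hence every $q_i'$, and every point of the cell $D_i'$ containing it, lies within $\rho:=nL2^{n/2}+R_K+1\lesssim R_Kn^42^{n/2}$ of the origin, using $L\lesssim R_Kn^3$ from lemma \ref{lem:short_vec}.

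\textbf{The bounding hyperplanes.} The Voronoi cell $V_0$ is determined by the relevant vectors $v$, each giving the bisector hyperplane $\langle x,v\rangle=\|v\|^2/2$. There are at most $2(2^n-1)<2^{n+1}$ relevant vectors (Voronoi's classical bound). For each relevant vector $v$, the translates $\langle x,v\rangle=\langle w,v\rangle+\|v\|^2/2$ with $w\in\Lambda$ that meet $B_\rho$ form parallel hyperplanes. Their spacing is at least $\|u_\Lambda\|^2/\|v\|$, since $\langle w,v\rangle$ ranges over a subgroup whose generator I would bound below by a simple-but-lossy argument. The cleanest route is to write $x$ in basis coordinates, $x=\sum a_ib_i$, and note that every Voronoi wall is contained in a hyperplane $\{x:\langle x,v\rangle=c\}$ with $c$ in a discrete set of size $\lesssim \rho\|v\|/\|u_\Lambda\|^2\cdot(\text{poly})$. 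Either way, the total number $s$ of hyperplanes meeting $B_\rho$ is at most $2^{n+1}\cdot \mathrm{poly}(n)\,R_K2^{n}$, i.e.\ $s\lesssim R_K4^{O(n)}$. Precise constants are irrelevant given the generous target.

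\textbf{Parametrizing copies of $K$.} A congruent copy of $K$ is $x\mapsto Ox+t$ with $O$ orthogonal. Each $q_j'$ is therefore linear in the $n^2+n$ unknowns (the entries of $O$ and $t$). Hence each $F_i(q_j')$ is a degree-$1$ polynomial in $N=n^2+n\le 2n^2$ variables, giving $M=s|K|$ polynomials. As in lemma \ref{lem:lowd_ad}, the sign pattern determines each $D_j'$ (because lower-dimensional faces were assigned to single cells), hence determines the collection $D_1,\dots,D_{|K|}$. Theorem \ref{thm:m_t} (padding $M\ge N$ if needed) bounds the count by
\[
\left(\frac{50\,s|K|}{N}\right)^{N}\le \left(50R_K4^{O(n)}|K|\right)^{2n^2}.
\]
This is at most $R_K^{2n^2}4^{O(n^3)}|K|^{2n^2}$, which is comfortably $\lesssim R_K^{2n^3}4^{3n^4}|K|^{2n^2}$.

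\textbf{Main obstacle.} The delicate step is bounding the number of wall-hyperplanes meeting $B_\rho$. One needs both the count of facet directions ($<2^{n+1}$) and a lower bound on the spacing of parallel walls, the latter via lemma \ref{lem:short_vec} and the shortness of the LLL basis. If the spacing argument is troublesome, I would fall back on a cruder bound. Every wall meeting $B_\rho$ is the bisector of a pair of lattice points within $\rho+1$ of the origin, and by lemma \ref{lem:s_t_sep} with separation $\gtrsim n^{-3}$ there are at most $(\rho n^3)^{O(n)}$ such points. That gives $s\le (R_Kn^72^{n})^{O(n)}$, whose $2n^2$-th power still fits inside $R_K^{2n^3}4^{3n^4}$. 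This slack is presumably why the stated bound has the large exponents.
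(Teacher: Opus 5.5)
Your overall architecture matches the paper: lift to $P_0$, parametrize copies by the $n^2+n$ entries of $(O,t)$, map injectively to sign patterns, and apply Theorem \ref{thm:m_t}. That part is fine. The step that carries all the content, however, is bounding the number $s$ of wall-hyperplanes meeting $B_\rho$, and it does not work as written.

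\textbf{The primary route fails.} It assumes that parallel walls with normal $v$ are uniformly spaced, i.e.\ that $\{\langle w,v\rangle : w\in\Lambda\}$ is a discrete subgroup of $\mathbb{R}$. That holds for the hexagonal lattice of Lemma \ref{lem:lowd_ad}, which is where the paper uses spacing, but not for a general lattice. If the numbers $\langle b_i,v\rangle$ are rationally independent, this subgroup is dense, and nothing in the covering-lattice construction rules that out. Every $w\in\Lambda\cap B_\rho$ contributes a wall on $\langle x,v\rangle=\langle w,v\rangle+\|v\|^2/2$. These are generically pairwise distinct, so a single facet direction already gives on the order of $|\Lambda\cap B_\rho|$ hyperplanes, which grows like $\rho^n$, i.e.\ like $R_K^n$. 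Hence $s\lesssim R_K4^{O(n)}$ is false in general, and your stronger conclusion $R_K^{2n^2}4^{O(n^3)}|K|^{2n^2}$ is not established.

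\textbf{The fallback is too loose for the stated exponent.} It is the right idea, but counting walls as bisectors of \emph{arbitrary} pairs of lattice points in $B_{\rho+1}$ gives $s\lesssim \bigl((C\rho n^3)^n\bigr)^2$. Then $s^{2n^2}$ carries a factor $R_K^{4n^3}$. Since $R_K$ is unbounded independently of $n$, this is not $\lesssim R_K^{2n^3}4^{3n^4}$, so your final sentence does not hold with the vague $O(n)$.

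\textbf{The fix.} The missing observation is that each wall is the bisector of two lattice points at distance less than $1$: a point on the common facet is within $r_\Lambda<1/2$ of both centers. So $s$ is at most (number of cells meeting $B_\rho$)$\times$(walls per cell).
\begin{itemize}
\item The first factor is $\lesssim R_K^n3^{n^2}$ by a volume argument with (\ref{line:cel_vol}).
\item The second factor is $\lesssim (Cn^3)^n$ by Lemmas \ref{lem:short_vec} and \ref{lem:s_t_sep}, or at most $2(2^n-1)$ by the Voronoi facet bound you cite.
\end{itemize}
This gives $s\lesssim R_K^n4^{n^2}$. With it, your Milnor--Thom computation with $N\le 2n^2$ yields exactly $R_K^{2n^3}4^{3n^4}|K|^{2n^2}$. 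This per-cell count is precisely what the paper's proof does.
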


\begin{proof}
Let $P_0$ denote the fundamental parallelepiped of $L\Lambda$ with basis $LB$, translated so that its center is at the origin; that is $$P_0 = \left\{\sum Lt_ib_i : -1/2 \le t_i < 1/2\right\}.$$ Let $D_1,\dots,D_{|K|}$ be an admissible collection of cells, with a copy of $K=\{q_1,\dots,q_{|K|}\}$ with $q_i \in D_i$. Take a representative $K_0$ of $K$ in $\E^n$ with $q_1 \in P_0.$ First, we note that $K_0$ is contained in a ball of radius $R(P_0) + R_K$ centered at the origin, since the diameter of $K_0$ is at most $R_K$. We furthermore note that $R(P_0) \lesssim Ln2^{n/2} \lesssim R_Kn^42^{n/2}$, and by extension $R(P_0)+R_K \lesssim R_Kn^42^{n/2} \lesssim R_K2^n$ as well.

Next, we observe that each Voronoi cell touching this ball of radius $R_K + R(P_0)$ is contained in a ball of radius $R_K + R(P_0)+1  \le C_1R_K2^n$ for some absolute constant $C_1$. Thus, by a volume argument, there are $$\lesssim\frac{Vol(B^n_{C_1R_K2^n})}{Vol(V_0)} \lesssim\frac{n^2Vol(B^n_{C_1R_K2^n})}{Vol(B_{1/2}^n)} \lesssim R_K^n3^{n^2}$$ such cells. Each cell, furthermore, is bounded by a collection of hyperplanes, each of which corresponds to a point that is distance at most $1$ from the center of the Voronoi cell. Lemma \ref{lem:short_vec} tells us that $\Lambda$ is $\gtrsim \frac{1}{n^3}$-separated, so using lemma \ref{lem:s_t_sep}, we get that there are at most $\lesssim (C_2n^3)^n$ such hyperplanes for any given Voronoi cell, for some absolute constant $C_2 > 0$. Thus, the boundaries of the Voronoi cells in this ball of radius $R_K + R(P_0)+1$ are defined by a total of at most $\lesssim R_K^n3^{n^2}(C_2n^3)^n \lesssim R_K^n4^{n^2}$ hyperplanes. Now, we follow the same argument as in lemma \ref{lem:lowd_ad}; that is, we fix an affine basis of at most $n+1$ elements of $K$, and use this to map $D_1,\dots,D_{|K|}$ to a sign pattern of at most $\lesssim |K|R_K^n4^{n^2}$ hyperplanes in at most $n(n+1) \lesssim 2n^2$ variables. As in lemma \ref{lem:lowd_ad}, this map is injective. Thus, the number of such sign patterns provides a bound on the number of admissible collections of cells. Using theorem \ref{thm:m_t}, we get that there is an absolute constant $C_3 > 0$, such that the number of these sign patterns is at most $$\left( \frac{50(C_3R_K^n4^{n^2})|K|}{2n^2} \right)^{2n^2}  \lesssim R_K^{2n^3}4^{3n^4}|K|^{2n^2}.$$ This completes the proof.
\end{proof}

For the remainder of the proof of theorem \ref{thm:main_bign}, we let $Z$ denote $|z(D)|$ for some $D \in \mathcal{D}_\Lambda$ (as before, our choice of $D$ does not matter), and set $p = \min\{1/(2Z),1/(4C_K) \}$.

\begin{lemma}\label{lem:high_d_prob}
    The probability that an admissible collection of cells is all colored blue is at most $e^{-|K|p/4}$
\end{lemma}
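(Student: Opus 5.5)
We will follow the proof of lemma \ref{lem:R2_allblue} and apply theorem \ref{thm:Suen}. Fix an admissible collection $D_1,\dots,D_{|K|}$ with a copy $\{q_1,\dots,q_{|K|}\}$ of $K$, $q_i \in D_i$. By the choice of $L$ (property 2 in section \ref{sec:mains}) these cells are distinct. Let $X_i$ be the indicator that $D_i \in S$, and set $X=\sum_i X_i$, so the event that the collection is all blue is $\{X=0\}$. Each $X_i$ has mean $p(1-p)^Z$, so $\mu = |K|p(1-p)^Z$. Since $p \le 1/(2Z)$, Bernoulli's inequality gives $(1-p)^Z \ge 1-pZ \ge 1/2$, and hence $\mu \ge |K|p/2$.

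For the dependency graph we declare $i \sim j$ when $\|q_i-q_j\|<5$. Suppose $I,J$ have no edges between them. Then lemma \ref{lem:5sep} shows that $\bigcup_{i\in I}(\{D_i\}\cup z(D_i))$ and $\bigcup_{j\in J}(\{D_j\}\cup z(D_j))$ are disjoint. Each $X_i$ depends only on the independent choices for the cells in $\{D_i\}\cup z(D_i)$, so the two families are independent. Here we need the torus distances to agree with the Euclidean ones at scale $<5$, which property 2 guarantees. By the definition of $C_K$, every $i$ has at most $C_K$ neighbours.

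We then bound the remaining parameters. For the neighbourhood sum, $\delta_i \le C_K p(1-p)^Z \le C_K p \le 1/4$, so $e^{2\delta} \le e^{1/2}$. For the pair sum we use $\E[X_iX_j] \le \mathbb{P}(D_i\in Q, D_j \in Q) = p^2$, since the cells are distinct. Summing over unordered adjacent pairs gives $\Delta \le \tfrac12 |K| C_K p^2 \le |K|p/8$, using $pC_K \le 1/4$. Theorem \ref{thm:Suen} then yields
\[
\mathbb{P}(X=0) \le \exp\left(-\tfrac{|K|p}{2} + \tfrac{|K|p}{8}e^{1/2}\right).
\]
Since $e^{1/2}/8 < 1/4$, the exponent is at most $-|K|p/4$.

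The constants are tight, so the main point of care is the bound on $\Delta$. If the crude estimate $e^{1/2}/8 \approx 0.206$ were not enough, the fallback is the sharper bound $\E[X_iX_j] \le p^2(1-p)^{Z}$, or noting $\E[X_iX_j]=0$ when $D_j \in z(D_i)$. In fact the crude bound already suffices, because $0.206 < 0.25$. The only other subtle point is justifying the dependency graph on the torus, and property 2 handles it.
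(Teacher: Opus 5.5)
Your proof is correct and takes essentially the same route as the paper: Suen's inequality with the dependency graph $\|q_i-q_j\|<5$, together with $\delta\le C_Kp\le 1/4$ and $(1-p)^Z\ge 1/2$. The only difference is bookkeeping. The paper keeps the factor $(1-p)^Z$ in $\Delta\le |K|C_Kp^2(1-p)^Z$ and writes the exponent as $-\mu(1-C_Kpe^{2C_Kp})$ with $C_Kpe^{2C_Kp}\le e^{1/2}/4\le 1/2$, whereas you drop that factor but gain the $1/2$ from counting unordered pairs, giving $-\tfrac12+\tfrac{e^{1/2}}{8}<-\tfrac14$; both close the argument.
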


\begin{proof}
    We will follow the same outline as in the proof of lemma \ref{lem:R2_allblue}. That is, we let $D_1,\dots,D_{|K|}$ be an admissible collection of cells, with points $q_1,\dots,q_{|K|}$ forming a copy of $K$ with $q_i \in D_i$. Then, we'll let $X_i$ be the indicator variable for the event that $D_i$ is colored red, and set $X = \sum_i X_i$. We say that $i \sim j$ if $||q_i - q_j|| < 5$. As in lemma \ref{lem:R2_allblue}, we can use lemma \ref{lem:5sep} to show this is a valid dependency graph.
    
    To apply theorem \ref{thm:Suen}, we compute $\mu = |K|p(1-p)^{Z}$ and $\Delta \le |K|C_K p^2(1-p)^{Z}$ and $\delta \le C_K p(1-p)^{Z} \le C_Kp$. This gives 

    \begin{align*}
        \mathbb{P}(X=0) &\le e^{-\mu + \Delta e^{2\delta}} \\
        &\le e^{-|K|p(1-p)^{Z} + |K|C_Kp^2(1-p)^{Z} e^{2C_{K}p}}\\
        &=e^{-|K|p(1-p)^{Z} (1-C_Kpe^{2C_Kp})}.
    \end{align*}
    Since $C_Kpe^{2C_Kp} \le e^{1/2}/4 \le 1/2 $ and $(1-p)^Z \ge 1/2$, we conclude 

    \begin{align*}
        \mathbb{P}(X=0) \le e^{-\mu + \Delta e^{2\delta}} &\le e^{-|K|p/4}.
    \end{align*}
\end{proof}
    Now, we need to put the bounds together. By lemmas \ref{lem:highd_ad} and \ref{lem:high_d_prob}, the expected number of admissible collections of cells that are colored all blue is 

\begin{align*}
    &\lesssim e^{-|K|p/4}R_K^{2n^3}4^{3n^4}|K|^{2n^2} \\
    &= e^{
        -\frac{|K|p}{4}
        +2n^3\log R_K
        +3n^4\log 4
        +2n^2\log |K|
    } \\
    &\le e^{
        -\frac{|K|p}{4}
        +2n^3\log R_K
        +10n^4
        +2n^2\log |K|
    }
\end{align*}
and thus, as long as $-|K|p/4 +2n^3\log R_K + 10n^4 + 2n^2\log |K| < -C$ for some absolute constant $C>0$, this expected number is less than one. Taking, for example, $|K| \gtrsim n^6\log R_K \max\{5^n,C_K\}$ will suffice. Then, if the expected number of admissible collections colored all blue is less than $1$, there must exist a coloring where no copy of $K$ is colored all blue. This completes the proof of Theorem \ref{thm:main_bign}. 

\medskip

\noindent \textbf{Acknowledgments:} We would like to thank Yuveshen Mooroogen for organizing the UBC SRP, where this research got started. We would also like to thank Arya Amin for working with us in the initial stage. Finally, we would like to acknowledge the use of AI in the course of our research. Various models of ChatGPT were used for brainstorming, literature search, and editing. In particular, the idea for the proof in section \ref{sec:packing} was generated in collaboration with AI. The manuscript was written in its entirety by the authors, who take full responsibility for its contents.

\bibliographystyle{abbrv}
\bibliography{references}

\end{document}